\newcommand*{\xhdr}[1]{\noindent{{\bf #1.}}}
\title{Stochastic Optimal Control of \\ Epidemic Processes in Networks}
\author{
Lars Lorch \\ %\thanks{footnote}
Harvard University\\
\texttt{lorch@college.harvard.edu} \\
\And
Abir De \\
MPI-SWS\\
\texttt{ade@mpi-sws.org} \\
\And
Samir Bhatt \\
Imperial College London \\
\texttt{s.bhatt@imperial.ac.uk} \\
\And
William Trouleau \\
EPFL \\
%\'Ecole Polytechnique F\'ed\'erale de Lausanne \\
\texttt{william.trouleau@epfl.ch} \\
\And
Utkarsh Upadhyay \\
MPI-SWS\\
\texttt{utkarshu@mpi-sws.org} \\
\And
Manuel Gomez-Rodriguez \\
MPI-SWS\\
\texttt{manuelgr@mpi-sws.org} \\
}
\begin{document}
% \nipsfinalcopy is no longer used

\maketitle

% \begin{abstract}
% \input{000abstract.tex}
% \end{abstract}

\vspace*{-8mm}
\section{Introduction}
\label{sec:introduction}
\vspace{-1mm}

% There is a rich history of research on epidemic processes. % , which started in the 1700s with the work of Daniel Bernoulli~\cite{model1}. 
%
Most of the early work on epidemic processes~\cite{model1,model2,model5,model3,model4} focused primarily on developing models for general population dynamics
%<left out to save space>---the number of healthy and infected people over time---
% individual dynamics---
rather than the state of any given individual in the population.
More recently, there has been research on modeling individual dynamics of epidemics in networks
~\cite{ahn2013global,cator2012second,chakrabarti2008epidemic,van2011n,van2009virus},
which has been fueled by the success of network science in explaining other real world processes~\cite{net1,net2}. 
However, most of these models ultimately resort to mean-field theory ~\cite{cator2012second}, which precludes them from characterizing the exact state of each individual.
As a consequence, they are unable to benefit from the increasing availability of fine-grained data about disease outbreaks~\cite{routledge2018estimating}.

In this context, an orthogonal line of research aims to design control strategies to mitigate the spread of epidemics in networks using various optimization 
techniques~\cite{drakopoulos2014efficient, ganesh2005effect, hansen2011optimal,khanafer2014optimal,scaman2016suppressing,tong2012gelling,wickwire1975optimal,zhang2011risk}. 
However, these approaches share one or more of the following shortcomings, which reduce its applicability in practice: 
(i) they leverage simple deterministic models of population dynamics and are thus unable to accurately assess the effect of individual treatments; 
(ii) they propose off-line strategies, often based on graph theoretic quantities, which do not account for the infection and recovery events of a specific epidemic over time, and as a consequence, achieve suboptimal performance; 
and
(iii) they assume that individuals can go on and off of treatment multiple times instantaneously or vary recovery rates arbitrarily, which is not realistic in a real-world setting.

In this work, we approach the development of models and control strategies of epidemic processes from the perspective of marked temporal point processes (MTPPs)~\cite{snyder2012random} 
and stochastic optimal control of stochastic differential equations (SDEs) with jumps~\cite{hanson2007}.
In contrast to previous work, this novel perspective is particularly well-suited to make use of fine-grained data on disease outbreaks and lets us overcome the shortcomings of current control strategies.
%
% More specifically, we consider a susceptible-infected-susceptible (SIS) epidemic process, where the nodes'{} infection and treatment states are modeled as continuous time stochastic processes represented 
% by a set of jump stochastic differential equations (SDEs). 
%
More specifically, we focus on susceptible-infected-susceptible (SIS) epidemic processes and represent the times when each individual becomes infected, recovered, or treated using MTPPs. We then exploit an alternative representation using SDEs with jumps, which have recently been introduced in the machine learning literature~\cite{kim2018leveraging, zarezade2018steering, zarezade2017redqueen}, 
to cast the design of control strategies of epidemic processes as a stochastic optimal control problem.
%
% Our approach departs from the existing literature of epidemic modeling and control in two ways~\cite{khanafer2014optimal,hansen2011optimal,wickwire1975optimal,zhang2011risk,scaman2016suppressing,ganesh2005effect}:
%
% \begin{enumerate}
% \item[I.] Our approach models the stochastic evolution process of the states of each individual without any approximation or simplistic assumption. In contrast, the previous works has considered several simplistic assumptions
% and mean field approximations, with a few notable exceptions~\cite{scaman2016suppressing,ganesh2005effect}. While these exceptions do not make any restrictive assumptions, their models are difficult 
% to use for designing control strategy~\cite{scaman2016suppressing}. Consequently, their control performance is suboptimal.
% \item[II.] Our control signal is a conditional intensity function, that is used to sample  the treatment initiation events. In contrast, previous works considered recovery rate and infection rate as control signals.
%\end{enumerate}
%Furthermore, we find that, the optimal rate of treatments for of an infected individual depends on the number of infected neighbors. 
%
The solution to this problem provides us with treatment intensities to determine who to treat and when to do so to minimize the amount of infected individuals over time.
Preliminary experiments with synthetic data show that our control strategy consistently outperforms several alternatives.
Looking into the future, we believe our methodology provides a promising step towards the development of practical data-driven control strategies of epidemic processes.

% \vspace{-3mm}
% \section{Preliminaries}
% \label{sec:preliminaries}
% % \vspace{-3mm}
% \input{020preliminaries.tex}
% \input{020formulation-old.tex}

\vspace{-1mm}
\section{Problem Formulation}
\label{sec:formulation}
\vspace{-1mm}

In this section, we first present our MTPP model of susceptible-infected-susceptible (SIS) epidemic processes, and then use an alternative representation
of MTPPs using SDEs with jumps to cast the design of control strategies as a stochastic optimal control problem.

\xhdr{A MTPP model of SIS epidemic processes}
Given an undirected network $\Gcal=(\Vcal,\Ecal)$ where nodes represent individuals and edges represent contacts, we represent the times when each 
node gets infected, recovered, or treated using a collection of counting processes $\Yb(t)$, $\Wb(t)$ and $\Nb(t)$, where the $i$-th entries, $Y_i(t)$, $W_i(t)$, and $N_i(t)$, count the number of times node $i$ got infected, recovered, and treated, respectively, by time $t$.
In addition, we track the current state of a node and its neighbors using a collection of state variables $\Xb(t)$, $\Hb(t)$, $\Zb(t)=\Ab^\top \Xb(t)$ and 
$\Mb(t)=\Ab^\top \Hb(t)$, where $X_i(t)=1$ ($H_i(t) = 1$) if node $i$ is infected (under treatment) at time $t$, and $X_i(t)=0$ ($H_i(t) = 0$) otherwise. 
$\Ab$ denotes the adjacency matrix with $A_{ij} = 1$ if node $i$ is connected to node $j$ and $A_{ij} = 0$ otherwise. Moreover, $\Ab_{i*}$ is defined as the $i$-th row, $\Ab_{*i}$ as the $i$-th column of $\Ab$. Thus, $\Zb(t)$ ($\Mb(t)$) denotes the number of nodes infected (under treatment) in the neighborhood $\Ncal(i)$ of node $i$.
% and $M_i(t)$ to be the number of neighbors of node $i$ that are being treated at time $t$. 
% The nodes' states $\Xb(t) = (X_i(t))_{i \in \Vcal}$, $\Hb(t) = (H_i(t))_{i \in \Vcal}$, $\Zb(t) = (Z_i(t))_{i \in \Vcal}$, and $\Mb(t) = (M_i(t))_{i \in \Vcal}$ follow stochastic differential equations (SDEs) with jumps

We then characterize the SIS dynamics of the state variables in terms of the counting processes using the following stochastic differential equations (SDEs) 
with jumps:
\begin{equation} \label{eq:sis-vars}
\begin{split}
dX_i(t) &= dY_i(t) - dW_i(t)   \quad\quad\quad\quad \quad\:
dZ_i(t) = \Ab_{*i}^\top \bm{dY}(t) - \Ab_{*i}^\top  \bm{dW}(t) \\
dH_i(t) &= dN_i(t) - H_i(t) \cdot dW_i(t)  \quad \quad
dM_i(t)  = \Ab_{*i}^\top \bm{dN}(t) - \Ab_{*i}^\top (\Hb(t) \odot \bm{dW}(t))
\end{split}
\end{equation}
where ``$\odot$'' denotes element-wise vector multiplication, $\Xb(0) = \Xb_0$, $\Zb(0) = \Ab^\top \Xb_0$, and $\Hb(0) = \Mb(0) = \mathbf{0}$.
Inspired by previous work~\cite{scaman2016suppressing}, we characterize the above counting processes using the following intensity functions or
\emph{rates}:
\begin{equation}\label{eq:sis-rates}
\begin{split}
\EE\left[ dY_i(t) | \mathcal{H}(t) \right] &= (1-X_i(t)) (\beta Z_i(t) - \gamma M_i(t)) dt \\
\EE\left[ dW_i(t) | \mathcal{H}(t) \right] &= X_i(t) ( \delta + \rho H_i(t)) dt \\
\EE\left[ dN_i(t) | \mathcal{H}(t) \right] &= \lambda_i (t) X_i(t) (1 - H_i(t)) dt,
\end{split}
\end{equation}
%
% $Y_i(t)$, $W_i(t)$, and $N_i(t)$ are counting processes indicating the number of times node $i$ got infected, the number of times it recovered, and the number of times it got selected for treatment, respectively, up until time $t$. 
% Their rates are modulated by the nodes' states $X_i(t)$, $H_i(t)$, and $Z_i(t)$ to maintain the invariants described in the first paragraph. 
%
where $\beta$, $\gamma$, $\delta$ and $\rho$ are disease specific parameters and $\Hcal(t)$ denotes the history of all counting processes up to time $t$. Under these definitions, a node can recover only if it is currently infected, get infected only if it is healthy, and undergo treatment only if it is infected and not under treatment yet.
Moreover, if a node gets infected, the infection rate of its healthy neighbors increases by $\beta$ units. If the node undergoes treatment, the infection rate of its healthy 
neighbors decreases by $\gamma \leq \beta$ units and its own recovery rate increases by $\rho$ units. If the node gets on treatment, the treatment continues until it recovers. 
Our goal is thus to find the control signals $\lambda_i(t) \geq 0$ that determine the treatment rates, \ie, the times when nodes start to undergo treatment to minimize the epidemic outbreak.

\xhdr{Control of epidemic processes as a stochastic optimal control problem}
Given an undirected network $\Gcal=(\Vcal,\Ecal)$ with node states $\Xb(t)$, $\Hb(t)$, $\Zb(t)$, and $\Mb(t)$, our goal is to find the non-negative 
control signals $\Lambdab(t) = (\lambda_i(t))_{i \in \Vcal}$ that minimize the expected discounted value of a particular loss function $\ell(\Lambdab(t), \Xb(t), \Hb(t), \Zb(t))$ 
over an infinite time horizon $[t_0, \infty)$, \ie,
\begin{align} \label{eq:minimization}
\underset{\Lambdab(t_0, \infty)}{\text{minimize}} & \quad \EE_{(\Yb, \Wb, \Nb)(t_0, \infty)}\left[ \int_{t_0}^{\infty} e^{-\eta t} \; \ell(\Lambdab(t), \Xb(t), \Hb(t), \Zb(t)) dt \right] \nonumber \\
\text{subject to} & \quad \lambda_i(t) \geq 0 \quad \forall i \in \Vcal, \forall t \in (t_0, \infty],
\end{align}
where $\Lambdab(t_0, \infty)$ denotes the values of the control signal over the infinite time horizon, $\eta \geq 0$ is the discount rate, and the expectation is taken over all possible realizations
of the counting processes from $t_0$ to $\infty$.
Moreover, we will consider the following functional form for the loss function:
\begin{align}
\ell(\Lambdab(t), \Xb(t)) &= \frac{1}{2} \Lambdab^\top(t) \Qb_{\Lambda} \Lambdab(t) +   \qb_{\Xb} \Xb(t) \label{eq:loss}
\end{align}
where $\Qb_{\Lambdab}$ is a given diagonal matrix with $(\Qb_{\Lambdab})_{ii} \geq 0$ and $\qb_{\Xb}$ is a given vector with $(\qb_{\Xb})_{i} \geq 0$. These parameters allow us to trade off how costly treatment control $\Lambdab(t)$ and consequently treatments $\Nb(t)$ are compared to the cost incurred from infected nodes $\Xb(t)$.

%These parameters allows us to trade off between the number of treatments $\Nb(t)$ and the number of infected nodes $\Xb(t)$ over time.

% $\Qb_{\Lambda}$ and $\qb_{\Xb}$.  
% offer to trade off the number of infected nodes and the number of treatments.
%
% The loss function $\ell$ penalizes the squared control intensity over all nodes, which is a common way to express the cost of controlling a dynamical system in optimal control problems [citations needed]. In addition, infected nodes $\Xb$ carry a linear state cost since it is the quantity we ultimately aim to minimize. 

\vspace{-2mm}
\section{Stochastic Optimal Control Algorithm}
\label{sec:algorithm}
\vspace{-2mm}

In this section, we first define a novel optimal cost-to-go function that accounts for the unique aspects of our problem, and then show that the Bellman'{}s principle of optimality holds. Exploiting this principle, we find the optimal solution using the corresponding Hamilton-Jacobi-Bellman (HJB) equation.

\vspace{-1mm}
\begin{definition}
The optimal cost-to-go function is defined as the minimum of the expected value of the cost of going from state $(\Xb(t), \Hb(t), \Zb(t), \Mb(t))$ at time $t$ to the final state at $t \to \infty$.
\begin{equation} \label{eq:def-cost-to-go}
\hspace*{-0.5cm}J(\Xb(t), \Hb(t), \Zb(t), \Mb(t), t)= \hspace*{-0.2cm}\underset{\Lambdab(t, \infty)}{\min} \EE_{(\Yb, \Wb, \Nb)(t, \infty)} \hspace*{-0.05cm} \left[\hspace*{-0.1cm}  \int_{t}^{\infty} \hspace*{-0.3cm} e^{-\eta \tau} \ell(\Lambdab(\tau), \Xb(\tau), \Hb(\tau), \Zb(\tau)) d\tau \right] \hspace*{-0.3cm} 
\end{equation} 
where the expectation is taken over all realizations of the underlying counting processes $\Yb$, $\Wb$ and $\Nb$ over the time interval $(t,\infty)$.
\end{definition}
To find the optimal control signal $\Lambdab(t, \infty)$ and the optimal cost-to-go $J$, we break the problem into several smaller subproblems using
the following Lemma, which resorts to Bellman'{}s principle of optimality (proven in Appendix~\ref{app:lemma:diff}):
\vspace{-1mm}
\begin{lemma}\label{lemma:diff}
The optimal cost-to-go $J$ defined in Eq.~\ref{eq:def-cost-to-go} satisfies the following equation:
\begin{align}
\hspace*{-0.5cm}0 = \hspace*{-0.2cm} \min_{\Lambdab(t, t+dt]}
\hspace*{-0.1cm}\left\{ \EE_{(\Yb, \Wb, \Nb)(t, t+dt]}\hspace*{-0.1cm}\left[ dJ(\Xb(t), \Hb(t), \Zb(t), \Mb(t), t) \right] + e^{-\eta t} \ell(\Lambdab(t), \Xb(t), \Hb(t))dt  \right\} \hspace*{-0.4cm} \label{eq:ss}
\end{align} \vspace{-3mm}
\end{lemma}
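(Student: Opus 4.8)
The plan is to derive Eq.~\ref{eq:ss} from the definition of $J$ in Eq.~\ref{eq:def-cost-to-go} via Bellman'{}s principle of optimality. The first observation I would make is that the process $(\Xb(t),\Hb(t),\Zb(t),\Mb(t))$ is Markov: by Eq.~\ref{eq:sis-rates}, the conditional intensities of the counting processes $\Yb$, $\Wb$, $\Nb$ over an infinitesimal interval depend on the history $\Hcal(t)$ only through the current value of the state variables. Hence conditioning the future cost on $\Hcal(t+dt)$ is the same as conditioning on $(\Xb(t+dt),\Hb(t+dt),\Zb(t+dt),\Mb(t+dt))$, which is what makes the cost-to-go a well-defined function of the state alone.

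Next I would split the time integral in Eq.~\ref{eq:def-cost-to-go} at $t+dt$, i.e. $\int_t^{\infty} = \int_t^{t+dt} + \int_{t+dt}^{\infty}$, and apply the tower property to the second piece, conditioning on the state at $t+dt$. Using the Markov property together with a dynamic-programming (measurable-selection) argument, the minimization over the whole control trajectory $\Lambdab(t,\infty)$ factorizes: the control on $(t,t+dt]$ affects only the first integral and the distribution of the state at $t+dt$, while the optimal control on $(t+dt,\infty)$ given that state contributes exactly $J(\Xb(t+dt),\Hb(t+dt),\Zb(t+dt),\Mb(t+dt),t+dt)$ by definition. This yields the recursion
\[
J(\Xb(t),\Hb(t),\Zb(t),\Mb(t),t) = \min_{\Lambdab(t,t+dt]}\left\{ \EE\left[\int_t^{t+dt} e^{-\eta\tau}\ell\,d\tau\right] + \EE\left[J(\Xb(t+dt),\Hb(t+dt),\Zb(t+dt),\Mb(t+dt),t+dt)\right]\right\}.
\]

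To finish, I would expand to first order in $dt$. The running-cost term equals $e^{-\eta t}\ell(\Lambdab(t),\Xb(t),\Hb(t))\,dt + o(dt)$, where the $o(dt)$ control uses that the jump intensities in Eq.~\ref{eq:sis-rates} are bounded on the finite state space, so the probability of two or more jumps in $(t,t+dt]$ is $o(dt)$. Writing $dJ := J(\Xb(t+dt),\Hb(t+dt),\Zb(t+dt),\Mb(t+dt),t+dt) - J(\Xb(t),\Hb(t),\Zb(t),\Mb(t),t)$, the second expectation becomes $J(\Xb(t),\Hb(t),\Zb(t),\Mb(t),t) + \EE[dJ]$. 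Since $J(\Xb(t),\Hb(t),\Zb(t),\Mb(t),t)$ does not depend on the control chosen on $(t,t+dt]$, it pulls out of the minimization and cancels with the left-hand side, leaving Eq.~\ref{eq:ss}.

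I expect the main obstacle to be making the Bellman decomposition rigorous — exchanging the minimization over the full trajectory for the nested minimization over $(t,t+dt]$ and $(t+dt,\infty)$. This is the step that requires the Markov property recorded above plus a careful argument that a near-optimal policy on $(t+dt,\infty)$ can be selected measurably as a function of the state at $t+dt$ and pasted onto an arbitrary choice on $(t,t+dt]$. The remaining infinitesimal bookkeeping of $o(dt)$ terms is routine by comparison.
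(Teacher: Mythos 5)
Your proposal is correct and follows essentially the same route as the paper's proof: split the integral in Eq.~\ref{eq:def-cost-to-go} at $t+dt$, nest the minimizations over $\Lambdab(t,t+dt]$ and $\Lambdab(t+dt,\infty)$, identify the tail as $J(\cdot,t+dt)$, write $J(\cdot,t+dt)=J(\cdot,t)+dJ$, and cancel. Your additional remarks on the Markov property, measurable selection, and the $o(dt)$ control of multiple jumps make explicit some technical points the paper leaves implicit, but the underlying argument is the same.
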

To proceed in the infinite horizon setting, we introduce the functional
\begin{align} \label{eq:v}
&V(\Xb(t), \Hb(t), \Zb(t), \Mb(t)) = e^{\eta t} J(\Xb(t), \Hb(t), \Zb(t), \Mb(t), t) \\
 &\qquad = \textstyle \underset{\Lambdab(t, \infty)}{\text{min}} \EE_{(\Yb, \Wb, \Nb)(t, \infty)}\left[ \int_{t}^{\infty} e^{-\eta (\tau - t)} \; \ell(\Lambdab(\tau), \Xb(\tau), \Hb(\tau), \Zb(\tau)) d\tau \right] \nn 
\end{align}
\ie, the time-independent cost functional, which only depends on the elapsed time and initial states \cite{kamienschwartz}. 
Then, using It\^{o}'{}s  calculus~\cite{hanson2007}, the optimal control signal $\Lambdab^{*}(t)$ is given by the following Lemma (proven in Appendices \ref{app:lemma:aux} and ~\ref{app:lemma:hjb}):

\vspace{-1mm}
\begin{lemma} \label{lemma:hjb}
Given Eqs.~\ref{eq:sis-vars},~\ref{eq:ss} and~\ref{eq:v}, the optimal control intensity is:
\begin{equation*}
\Lambdab^*(t) = - \Qb_{\Lambdab}^{-1} \diag \big(\mathbf{1} - \Hb(t) \big) \, \Delta_{V}^N  \, \Xb(t) 
\end{equation*}
where the functional $V$ needs to satisfy the following partial differential equation (PDE):
\begin{align}\label{eq:pde}
\begin{split}
0 &= - \eta V(\Xb(t), \Hb(t), \Zb(t), \Mb(t)) +  (\mathbf{1}-\Xb(t))^\top \Delta_{V}^Y \, (\beta\Zb(t) - \gamma \Mb(t))\\
&\quad+ ( \delta \mathbf{1} + \rho\Hb)^\top  \Delta_{V}^W  \,\Xb(t) - \frac{1}{2} \Xb(t)^\top \big( \Delta_{V}^N \big)^2   \diag\big( \mathbf{1} - \Hb(t) \big)^2 \Qb_{\Lambdab}^{-1}  \: \Xb(t)  + \qb_{\Xb} \Xb(t),
\end{split}
\end{align}
and $ \Delta_{V}^Y,  \Delta_{V}^W,  \Delta_{V}^N$ are diagonal matrices  defined for notational simplicity in appendix \ref{app:lemma:hjb}.
%
%with
%\begin{align*}
%(\Delta_{V}^Y)_{ii} &= V(\Xb(t) + \eb_i,\Hb(t), \Zb(t) + \Ab \eb_i, \Mb(t), t) - V(\Xb(t), \Hb(t),\Zb(t), \Mb(t), t), \\
%(\Delta_{V}^W)_{ii} &= V(\Xb(t) - \eb_i, \Hb(t) - \Hb(t) \odot \eb_i, \Zb(t) - \Ab \eb_i, \Mb(t) - \Ab ( \Hb(t) \odot \eb_i), t) \\ & \qquad \qquad - V(\Xb(t), \Hb(t),\Zb(t), \Mb(t), t), \\
%(\Delta_{V}^N)_{ii} &= V(\Xb(t), \Hb(t) + \eb_i, \Zb(t), \Mb(t) + \Ab \eb_i, t) - V(\Xb(t), \Hb(t),\Zb(t), \Mb(t),  t).
%\end{align*}
\end{lemma}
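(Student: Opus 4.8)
\emph{Step 1: It\^o's formula for the jumps.} The plan is to recognize Eq.~\ref{eq:pde} as the Hamilton--Jacobi--Bellman equation attached to the Bellman relation of Lemma~\ref{lemma:diff} and to read off $\Lambdab^*(t)$ from a pointwise quadratic minimization. I would work with $J(\cdot,t)=e^{-\eta t}V(\cdot)$ from Eq.~\ref{eq:v}, treat $(\Xb,\Hb,\Zb,\Mb)$ as a pure-jump process driven by $\Yb,\Wb,\Nb$, and abbreviate the running cost of Eq.~\ref{eq:loss} by $\ell$. First I would apply the change-of-variables formula for SDEs with jumps~\cite{hanson2007} to $J$. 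Since $V$ has no explicit time dependence, the continuous part contributes only $\partial_t J\,dt=-\eta e^{-\eta t}V\,dt$, and every remaining term is a jump term. From Eq.~\ref{eq:sis-vars}, a unit jump of $Y_i$ maps $(\Xb,\Hb,\Zb,\Mb)$ to $(\Xb+\mathbf{e}_i,\Hb,\Zb+\Ab_{*i},\Mb)$; a unit jump of $W_i$ maps it to $(\Xb-\mathbf{e}_i,\Hb-H_i\mathbf{e}_i,\Zb-\Ab_{*i},\Mb-H_i\Ab_{*i})$; and a unit jump of $N_i$ maps it to $(\Xb,\Hb+\mathbf{e}_i,\Zb,\Mb+\Ab_{*i})$. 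Writing $(\Delta_V^Y)_{ii}$, $(\Delta_V^W)_{ii}$, $(\Delta_V^N)_{ii}$ for the corresponding finite increments of $V$ --- these are exactly the diagonal entries of $\Delta_V^Y,\Delta_V^W,\Delta_V^N$ --- one obtains $dJ=e^{-\eta t}\big(-\eta V\,dt+\sum_i(\Delta_V^Y)_{ii}\,dY_i+\sum_i(\Delta_V^W)_{ii}\,dW_i+\sum_i(\Delta_V^N)_{ii}\,dN_i\big)$.

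\emph{Step 2: conditioning and substitution.} Next I would take $\EE[\cdot\mid\mathcal{H}(t)]$, which kills the compensated-jump martingales, so that the intensities of Eq.~\ref{eq:sis-rates} turn the three sums --- after collecting the (diagonal) factors --- into $(\mathbf{1}-\Xb)^\top\Delta_V^Y(\beta\Zb-\gamma\Mb)\,dt$, $(\delta\mathbf{1}+\rho\Hb)^\top\Delta_V^W\Xb\,dt$, and $\Xb^\top\Delta_V^N\diag(\mathbf{1}-\Hb)\Lambdab(t)\,dt$. Substituting these together with $e^{-\eta t}\ell\,dt$ into Eq.~\ref{eq:ss} and dividing out the common factor $e^{-\eta t}\,dt$ leaves $0=\min_{\Lambdab(t)}\big\{-\eta V+(\mathbf{1}-\Xb)^\top\Delta_V^Y(\beta\Zb-\gamma\Mb)+(\delta\mathbf{1}+\rho\Hb)^\top\Delta_V^W\Xb+\Xb^\top\Delta_V^N\diag(\mathbf{1}-\Hb)\Lambdab+\tfrac12\Lambdab^\top\Qb_{\Lambdab}\Lambdab+\qb_{\Xb}\Xb\big\}$.

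\emph{Step 3: minimization.} The bracket is a convex quadratic in $\Lambdab$ because $\Qb_{\Lambdab}$ is diagonal with positive entries, so the stationarity condition $\Qb_{\Lambdab}\Lambdab+\diag(\mathbf{1}-\Hb)\Delta_V^N\Xb=\mathbf{0}$ yields $\Lambdab^*=-\Qb_{\Lambdab}^{-1}\diag(\mathbf{1}-\Hb)\Delta_V^N\Xb$, the claimed control. Substituting $\Lambdab^*$ back and using that $\Qb_{\Lambdab}$, $\diag(\mathbf{1}-\Hb)$ and $\Delta_V^N$ all commute (being diagonal), the two $\Lambdab$-dependent terms collapse to $-\tfrac12\Xb^\top(\Delta_V^N)^2\diag(\mathbf{1}-\Hb)^2\Qb_{\Lambdab}^{-1}\Xb$, which is exactly the quadratic term of Eq.~\ref{eq:pde}; the remaining terms are already in the stated form, completing the derivation.

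\emph{Main obstacle.} I expect the delicate points to be: (i) bookkeeping the $W_i$ jump, which simultaneously decrements $\Xb$ and $\Zb$ \emph{and} resets $H_i$ together with the matching entries of $\Mb$, so that $\Delta_V^W$ is defined as the correct finite-difference operator; and (ii) discharging the constraint $\lambda_i(t)\ge 0$ of Eq.~\ref{eq:minimization}, since the unconstrained minimizer is admissible only when $(\Delta_V^N)_{ii}\le 0$, i.e.\ when placing a node on treatment cannot increase the optimal cost-to-go --- a monotonicity property of $J$ in the infection/treatment state that one should verify (otherwise one projects onto the nonnegative orthant, which leaves the formula unchanged wherever the constraint is slack). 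A minor technical point is the legitimacy of It\^o's jump formula and the vanishing of the martingale terms under $\EE[\cdot\mid\mathcal{H}(t)]$, which follows from local integrability of the intensities in Eq.~\ref{eq:sis-rates}.
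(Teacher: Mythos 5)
Your proposal is correct and follows essentially the same route as the paper: It\^o's jump calculus applied to $J=e^{-\eta t}V$ to obtain the finite-difference jump terms, conditioning on $\mathcal{H}(t)$ to replace $dY_i,dW_i,dN_i$ by the intensities of Eq.~\ref{eq:sis-rates}, pointwise minimization of the resulting convex quadratic in $\Lambdab$, and back-substitution to obtain Eq.~\ref{eq:pde}. Your remark that the unconstrained minimizer is only admissible when $(\Delta_V^N)_{ii}\le 0$ is exactly how the paper discharges the constraint, deferring its verification to the construction of $V$ in Theorem~\ref{thm:finalxx}.
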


%
% \begin{theorem}\label{thm:opt2}
% The optimal control $\Lambda(t)$ which is a solution for the loss function in Eq.~\ref{eq:loss}
% for $t\in(t_0, \infty)$ is also the solution for the following optimization problem.
% \begin{align}
% \underset{\Lambdab(t_0, \infty)}{\text{minimize}} & \quad \Lambdab^\top(t) \,\diag \big(\mathbf{1} - \Hb(t) \big) \, \Delta_{V}^N \Xb(t)  + \frac{1}{2} \Lambdab^\top(t) \Qb_{\Lambdab} \Lambdab(t) + \qb_{\Xb} \Xb(t)  \nonumber \\
% \text{subject to} & \quad \lambda_i(t) \geq 0 \quad \forall i \in \Vcal, \forall t \in (t_0, \infty), \label{eq:hjb-minimization}
% \end{align}
% Here, $\Delta_{V}^N= V(\Xb(t), \Hb(t) + \eb_i, \Zb(t), \Mb(t) + \Ab \eb_i, t)- V(\Xb(t), \Hb(t),\Zb(t), \Mb(t),  t)$, and  \\
% $V(\Xb(t), \Hb(t), \Zb(t), \Mb(t)) = \underset{\Lambdab(t, \infty)}{\text{min}} \EE_{(\Yb, \Wb, \Nb)(t, \infty)}\left[ \int_{t}^{\infty} e^{-\eta (\tau - t)} \; \ell(\Lambdab(\tau), \Xb(\tau), \Hb(\tau), \Zb(\tau)) d\tau \right]$.
% \end{theorem}
%
To compute $\Lambdab^{*}(t)$ in the above, we need to solve the PDE in Eq.~\ref{eq:pde}. Fortunately, the following theorem provides us with a solution:
\vspace{-1mm}
\begin{theorem}\label{thm:finalxx}
There exist $\bb$, $\cb$, $\db$ and $\fb$ such that $V(\Xb(t), \Hb(t), \Zb(t), \Mb(t)) = \bb^\top \Xb(t) + \cb^\top \Hb(t) + \db^\top \Zb(t) + \fb^\top \Mb(t)$ is a valid
solution to the PDE in Eq.~\ref{eq:pde}. Then, the optimal control signal $\Lambdab^{*}(t) = (\lambda^*_i(t))_{i \in \Vcal}$ is given by:
\begin{align*}
\hspace*{-0.13cm}
\lambda_i^*(t) \hspace*{-0.03cm}= \hspace*{-0.03cm}\begin{cases}
\textstyle 0 ~~ \text{if } X_i(t) = 0 \\
\textstyle - \frac{1}{K^{(\text{I})}}  ( 1 \hspace*{-0.04cm} - \hspace*{-0.04cm} H_i(t) )  Q_{\Lambda, i}^{-1} \Big [ K^{(\text{II})}_{\Lambda, i}  - \hspace*{-0.02cm}\sqrt{  2 K^{(\text{I})} Q_{\Lambda, i} \big ( K^{(\text{III})} \Ab_{i*} \db \hspace*{-0.03cm} + \hspace*{-0.03cm} K^{(\text{IV})}_{X, i} \big ) \hspace*{-0.07cm}+\hspace*{-0.07cm} \big(K^{(\text{II})}_{\Lambda, i}\big)^2 } \Big ] ~ \text{if } X_i(t) \hspace*{-0.05cm} = \hspace*{-0.05cm}1
\end{cases}
\end{align*}
where $\Kb^{(.)}$ are constants depending on model parameters, and whenever $\Xb(t)$ changes, $\db$ is recomputed online by solving the following linear program (LP)
%\emph{by Eq. \ref{eq:lambda-sol-unconstr} and Lemma 2. Whenever $\Xb(t)$ changes, $\db$ is recomputed online by solving the following linear program}
%
%
\begin{align}
\min_{\db_{(0)}} \quad \big| \Ab_{(10)} \db_{(0)} + \tfrac{1}{K^{(\text{III})} } \bm{K}^{(\text{IV})}_{X, (1)}  \big|_1 \quad \text{s.t.} & \quad \Ab_{(10)} \db_{(0)} \geq - \tfrac{1}{K^{(\text{III})} } \bm{K}^{(\text{IV})}_{X, (1)} \label{eq:policy-optimization}
\end{align}
where $\db_{(0)}$ is the vector $\db$ containing only indices where $X_i(t) = 0$, and $\Ab_{(10)}$ contains the $i$-th rows of $\Ab$ iff $X_i(t) = 1$ and $i$-th columns of $\Ab$ iff $X_i(t) = 0$. $\db_{(1)} = \mathbf{0}$.
 % Hence, if we denote $|\Xb(t)|_1 = n$, $\db \in \mathbb{R}^{|\Vcal| - n}$ and $\Ab_{(10)} \in \mathbb{R}^{n \times (|\Vcal| - n)}$. 
Refer to Appendix ~\ref{app:thm:finalxx}~ for details.
%
% the following linear program
%
%
% \begin{align}
% \min_{\db_{(0)}} \quad& \left| \Ab_{(10)} \db_{(0)} + \tfrac{1}{K^{(\text{III})} } \bm{K}^{(\text{IV})}_{X, (1)}  \right|_1 \nonumber \\
% \text{s.t.}\quad& \Ab_{(10)} \db_{(0)} \geq - \tfrac{1}{K^{(\text{III})} } \bm{K}^{(\text{IV})}_{X, (1)} \label{eq:policy-optimization}
% \end{align}
%
% where $K^{(*)}$ are constants depending on model parameters. Again,  $\db_{(0)}$ is the vector $\db$ containing only indices where $X_i(t) = 0$ and $\Ab_{(10)}$ contains the $i$-th rows of $\Ab$ iff $X_i(t) = 1$ and $i$-th columns of $
%\Ab$ iff $X_i(t) = 0$. Hence, if we denote $|\Xb(t)|_1 = n$, $\db \in \mathbb{R}^{|\Vcal| - n}$ and $\Ab_{(10)} \in \mathbb{R}^{n \times (|\Vcal| - n)}$. See Appendix for details.
\end{theorem}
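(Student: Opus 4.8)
The plan is to verify the linear candidate by direct substitution into Eq.~\ref{eq:pde} and to extract $\bb,\cb,\db,\fb$ by matching coefficients. First I would evaluate the jump differences for $V=\bb^\top\Xb+\cb^\top\Hb+\db^\top\Zb+\fb^\top\Mb$ using Eq.~\ref{eq:sis-vars}: an infection of node $i$ raises $X_i$ by one and $\Zb$ by $\Ab_{*i}$; a treatment of $i$ raises $H_i$ by one and $\Mb$ by $\Ab_{*i}$; a recovery of $i$ lowers $X_i$ by one and $\Zb$ by $\Ab_{*i}$, and, gated by $H_i$, lowers $H_i$ by $H_i$ and $\Mb$ by $H_i\,\Ab_{*i}$. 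Hence the three matrices are diagonal with $(\Delta_{V}^{Y})_{ii}=b_i+\Ab_{i*}\db$, $(\Delta_{V}^{N})_{ii}=c_i+\Ab_{i*}\fb$, and $(\Delta_{V}^{W})_{ii}=-(b_i+\Ab_{i*}\db)-H_i(c_i+\Ab_{i*}\fb)$; the last is state-dependent, which is the first point that needs care, since it makes the recovery term quadratic in $H_i$.

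Next I would freeze the current configuration $\Xb(t)$. Then $\Zb=\Ab^\top\Xb$ is also frozen, so the only free state variable is $\Hb$, constrained to $H_i\le X_i$ (treated nodes are infected). Substituting the diagonal matrices above and using the idempotence $X_i^2=X_i$, $H_i^2=H_i$, every term of Eq.~\ref{eq:pde} is \emph{affine} in $\Hb$ — in particular the quadratic-in-$H_i$ pieces of the recovery term and of the control cost collapse, and no $H_iH_j$ cross terms survive. Thus the PDE holds on the whole reachable slice iff its constant term vanishes and the coefficient of each $H_i$ with $X_i(t)=1$ vanishes. The $H_i$-equation is a scalar quadratic in $u_i:=(\Delta_{V}^{N})_{ii}$ whose lower-order coefficient is an affine function of $b_i+\Ab_{i*}\db$ (and of the analogous quantities of $i$'s healthy neighbours); solving it with the quadratic formula, substituting back into $\Lambdab^*=-\Qb_{\Lambdab}^{-1}\diag(\mathbf 1-\Hb)\,\Delta_{V}^{N}\,\Xb$ from Lemma~\ref{lemma:hjb}, and folding $\eta,\beta,\gamma,\delta,\rho,\qb_{\Xb}$ into the constants $K^{(\text{I})},K^{(\text{II})}_{\Lambda,i},K^{(\text{III})},K^{(\text{IV})}_{X,i}$ yields exactly the stated closed form for $\lambda_i^*(t)$. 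The branch of the square root is fixed by requiring $\lambda_i^*\ge 0$, which is equivalent to the radicand being at least $\big(K^{(\text{II})}_{\Lambda,i}\big)^2$, \ie\ to $K^{(\text{III})}\Ab_{i*}\db+K^{(\text{IV})}_{X,i}\ge 0$ for every infected $i$.

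It remains to pin down the still-free healthy-node components of $\db$; after imposing the normalization $\db_{(1)}=\mathbf 0$ that the theorem adopts, $\Ab_{(10)}\db_{(0)}$ is precisely the vector $(\Ab_{i*}\db)_{i:X_i=1}$. These components must satisfy the remaining constant-term equation and the cross-configuration consistency of $V$ as $\Xb(t)$ jumps, together with the non-negativity constraints $K^{(\text{III})}\Ab_{i*}\db+K^{(\text{IV})}_{X,i}\ge 0$ just derived. Writing these as $\Ab_{(10)}\db_{(0)}\ge-\tfrac{1}{K^{(\text{III})}}\bm{K}^{(\text{IV})}_{X,(1)}$ and selecting the admissible $\db_{(0)}$ that makes the total radicand slack minimal — an $\ell_1$ objective over a polyhedron on which the argument is already nonnegative — is exactly the LP in Eq.~\ref{eq:policy-optimization}; I would finish by checking this LP is always feasible (for instance $\db_{(0)}$ taken large along the relevant directions, using the fixed sign of $\bm{K}^{(\text{IV})}_{X,(1)}$ and $K^{(\text{III})}$) and that its optimum indeed makes $V$ satisfy Eq.~\ref{eq:pde} at the current state. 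The main obstacle is not any single step but the bookkeeping of the middle paragraph: handling the state-dependent $\Delta_{V}^{W}$ correctly, verifying that freezing $\Xb(t)$ collapses the PDE to a genuinely affine identity in $\Hb$ so that only finitely many scalar equations remain, and cleanly separating the equations that determine $\bb,\cb,\fb$ from those that become the LP constraints, so that the clean form $K^{(\text{III})}\Ab_{i*}\db+K^{(\text{IV})}_{X,i}$ indeed emerges.
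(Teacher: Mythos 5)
Your setup is right and matches the paper's: the jump differences for the linear ansatz are exactly $(\Delta_V^Y)_{ii}=b_i+\Ab_{i*}\db$, $(\Delta_V^N)_{ii}=c_i+\Ab_{i*}\fb$, $(\Delta_V^W)_{ii}=-(b_i+\Ab_{i*}\db)-H_i(c_i+\Ab_{i*}\fb)$, the per-node quadratic in $(\Delta_V^N)_{ii}$ coming from the $H_iX_i$ coefficient is the paper's Eq.~\ref{eq:cases-X1-c}, the branch/sign analysis ($\lambda_i^*\ge 0$ iff the radicand exceeds $(K^{(\text{II})}_{\Lambda,i})^2$ iff $K^{(\text{III})}\Ab_{i*}\db+K^{(\text{IV})}_{X,i}\ge 0$) reproduces Eq.~\ref{eq:radical-condition}, and the ``minimal slack'' $\ell_1$ reading of the LP is the paper's ``minimal interventions'' selection. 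However, there is a genuine gap in your middle step. After freezing $\Xb(t)$ and reducing the PDE to an affine identity in $\Hb$, the conditions you impose are one scalar constant-term equation plus one equation per infected node (the $H_i$ coefficients). That is far too weak: it leaves the system massively underdetermined and, more importantly, it does not produce the relations that make the stated closed form come out. The paper instead requires the coefficient of each of $X_i$, $Z_i$, $M_i$, and $H_iX_i$ to vanish \emph{per node}, treating $\Xb,\Zb,\Mb,\Hb$ as free up to the case split on $X_i(t)\in\{0,1\}$ (a $6|\Vcal|$-equation system, Eqs.~\ref{eq:cases-X0-a}--\ref{eq:cases-X1-d}). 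It is precisely the per-node $Z_i$- and $M_i$-coefficient equations at healthy nodes that give $\bb_{(0)}=(\tfrac{\eta}{\beta}\Ib-\Ab_{(00)})\db_{(0)}$ and $\fb_{(0)}=-\tfrac{\gamma}{\beta}\db_{(0)}$, and the same coefficients at infected nodes that force $\db_{(1)}=\fb_{(1)}=\mathbf 0$; together with the per-node $X_i$-coefficient equation \ref{eq:cases-X1-d} (which in your framing would be absorbed into the single lumped constant term and lost), these are what turn the lower-order coefficient of your quadratic into $K^{(\text{III})}\Ab_{i*}\db+K^{(\text{IV})}_{X,i}$. Your appeal to ``the remaining constant-term equation and cross-configuration consistency'' does not supply these equations, and the frozen-$\Xb$ route as described cannot recover them.

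A secondary point: the choice $\cb_{(0)}=-\Ab_{(00)}\fb_{(0)}$ (needed so that $(\Delta_V^N)_{ii}\le 0$ also at healthy nodes, where no equation constrains $c_i$) and the observation that the $6|\Vcal|$-vs-$4|\Vcal|$ overdetermination is resolved by the $X_i=0$/$X_i=1$ case split are both absent from your sketch; the first is harmless but should be stated, and the second is the mechanism that makes the paper's stronger coefficient matching consistent at all. Your feasibility remark for the LP is fine (the constraint set is a nonempty polyhedron and the objective is the nonnegative sum of the slacks), but it rests on the same per-node identities you have not yet derived.
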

Finally, given the above optimal control signal $\Lambdab^*(t)$, the online algorithm to determine which nodes to treat over time based on the control signal directly follows and is summarized in Appendix~\ref{app:algorithm}.

% Furthermore, the time $t$ in $J$ can be decoupled from the state variables $\Xb,\ \Hb,\ \Zb$ and $\Mb$ as follows. 
% \begin{proposition}
% The optimal cost-to-go $J$ satisfies the following property: 
% \begin{align}
% J(\Xb(t), \Hb(t), \Zb(t), \Mb(t), t)= e^{-\eta t} V(\Xb(t), \Hb(t), \Zb(t), \Mb(t))  
% \end{align}
% where,  $V(\Xb(t), \Hb(t), \Zb(t), \Mb(t)) = \underset{\Lambdab(t, \infty)}{\text{min}} \EE_{(\Yb, \Wb, \Nb)(t, \infty)}\left[ \int_{t}^{\infty} e^{-\eta (\tau - t)} \; \ell(\Lambdab(\tau), \Xb(\tau), \Hb(\tau), \Zb(\tau)) d\tau \right]$ is the time-independent cost function. The value of the integral  depends on the initial states not on the initial time, i.e. only on the elapsed time \cite{kamienschwartz}.
% \end{proposition}
% Such a property will be very useful for computing the optimal value of $J$.

\vspace{-0mm}
\section{Experiments}
\label{sec:expt}
\vspace{-0mm}

\xhdr{Experimental setup}
We use the network $\Gcal = (\Vcal, \Ecal)$ of the contiguous states of the United States borders (49 nodes, 107 edges) and experiment with different
settings for the model parameters $\beta$, $\gamma$, $\delta$, and $\rho$ and control parameters $\Qb_\Lambda$ and $\qb_X$, which trade off how 
much we value treatments and infections. A low $\qb_X$ relative to $\Qb_\Lambda$ indicates that the effort of treating infected nodes is considered expensive relative to the cost incurred from having nodes infected. Conversely, a high $\qb_X$ would indicate that we value being in a healthy state more than reducing the effort of treating infected nodes.  

We use the number of infected nodes, $\mathbf{1}^{\top} \Xb(t)$, and the \emph{total infection coverage}, $\int_{t_0}^{t_f} \mathbf{X} (t) dt$, as measures of performance, 
and we compare our stochastic optimal control algorithm to several alternatives:
\vspace{-1mm}
\begin{itemize}[noitemsep,nolistsep,leftmargin=0.7cm]
\item[--] \emph{Trivial policy [T]}: $\lambda_i(t)$ is constant and equal for all infected nodes not yet under treatment.
\vspace{1mm}
\item[--] \emph{Front-loaded trivial policy [T-FL]}: $\lambda_i(t) = ||\Lambdab^{*}(t)||_{\infty} \II( \mathbf{1}^{\top} \Nb(t) \leq \EE[\mathbf{1}^{\top} \Nb^{*}(t_f)] )$ for all infected nodes not yet under treatment, 
where $\Lambdab^{*}(t)$ and $\Nb^*(t)$ is the optimal control signal and number of treatments by our optimal control algorithm, respectively.
\vspace{1mm}
\item[--] \emph{Most-neighbors degree policy [MN]}: $\lambda_i(t) \propto \text{deg}(i)$  for all infected nodes not yet under treatment, where $\text{deg}(i)$ is the degree of node $i$. This policy prioritizes central nodes in the network.
\vspace{1mm}
\item[--] \emph{Front-loaded most-neighbors degree policy [MN-FL] }: $\lambda_i(t) \propto \text{deg}(i) ||\Lambdab^{*}(t)||_{\infty} \II( \mathbf{1}^{\top} \Nb(t) \leq \EE[\mathbf{1}^{\top} \Nb^{*}(t_f)] )$ 
for all infected nodes not yet under treatment.
\vspace{1mm}
\item[--] \emph{Least-neighbors degree policy [LN]}: $\lambda_i(t) \propto ( \max_{j \in \Vcal}  \{\text{deg}(j)\} - \text{deg}(i) + 1)$ for all infected nodes not yet under treatment. This policy prioritizes nodes in the periphery of the network.
\vspace{1mm}
\item[--] \emph{Front-loaded least-neighbors degree policy [LN-FL] } $\lambda_i(t) \propto ( \max_{j \in \Vcal}  \{\text{deg}(j)\} - \text{deg}(i) + 1) ||\Lambdab^{*}(t)||_{\infty} \II( \mathbf{1}^{\top} \Nb(t) \leq \EE[\mathbf{1}^{\top} \Nb^{*}(t_f)] )$
for all infected nodes not yet under treatment.
\vspace{1mm}
\item[--] \emph{Largest reduction in spectral radius policy [LRSR]}: $\lambda_i(t)$ is proportional to which node'{}s removal leads to the greatest decrease in spectral radius of $\mathbf{A}$~\cite{scaman2016suppressing, tong2012gelling}.
\end{itemize}
By definition, the front-loaded policies perform approximately the same number of treatments as our optimal control algorithm by the end of the simulation. For the trivial policy (T), the most-neighbors degree policy (MN), the least-neighbors degree policy (LN), and the largest reduction in spectral radius policy (LRSR), we rescale the intensities such that they have performed approximately ($\pm5$\%) the same number of treatments $\mathbf{1}^{\top} \Nb(t_f)$ as our optimal control algorithm by the end of the simulation.
Moreover, for each parameter setting, we performed $50$ independent simulations, and in each simulation, we picked ten nodes at random as initial infections at time $t = 0$.

\xhdr{Results} Figure~\ref{fig:results} summarizes the results for one set of model parameters\footnote{\scriptsize We obtained qualitatively similar results for other model parameter settings.}, which show that our stochastic optimal control algorithm outperforms all other policies in reducing the spread of the epidemic. 
In particular, it becomes apparent that especially for high $\qb_X$, our algorithm neglects the cost of control in optimizing the objective function and heals nodes in a more effective way than all other policies, as shown in the difference of performance for higher $\qb_X$.
\begin{figure}[t]
\centering
\includegraphics[width=0.35\textwidth]{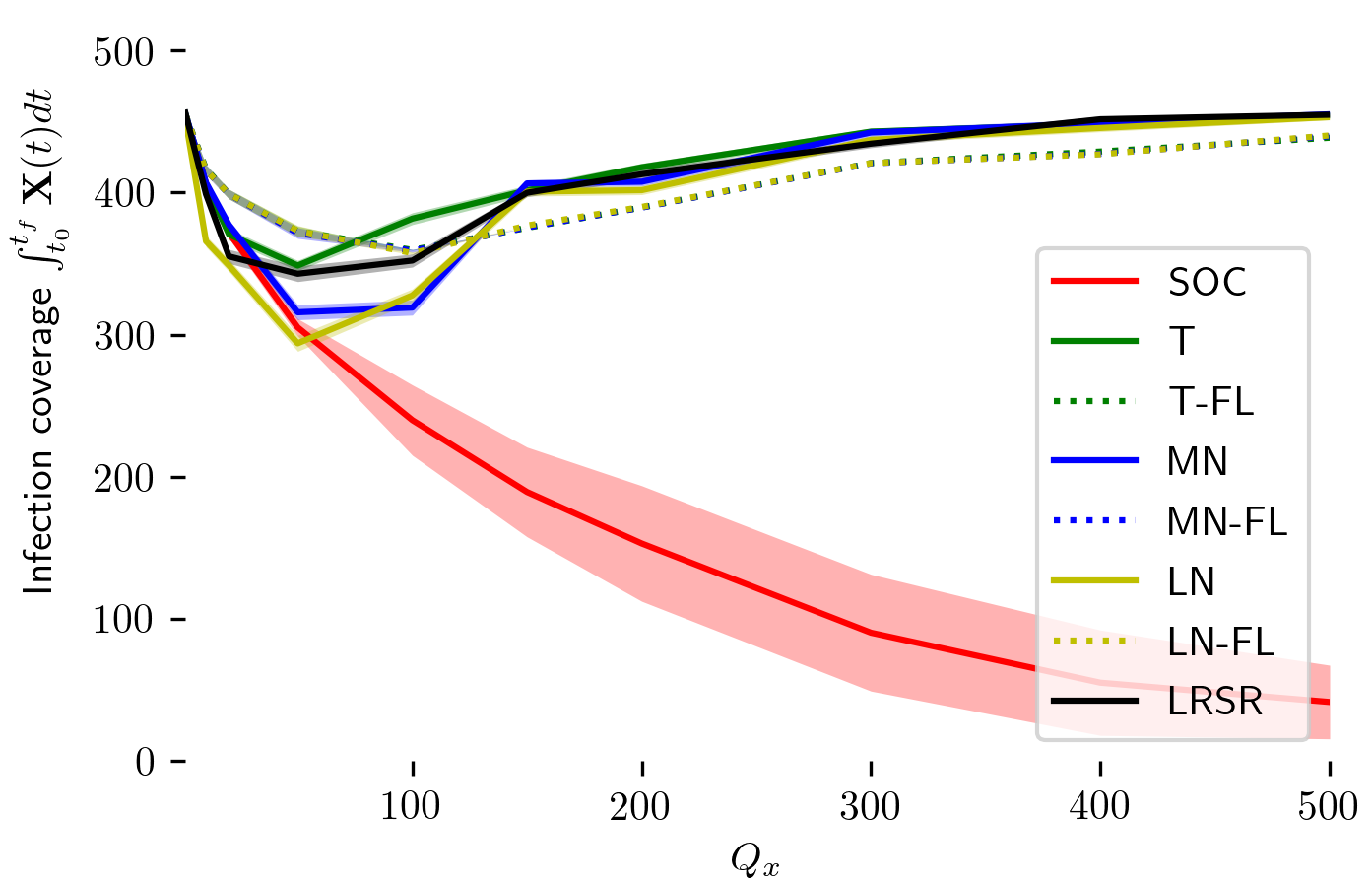}
\includegraphics[width=0.305\textwidth]{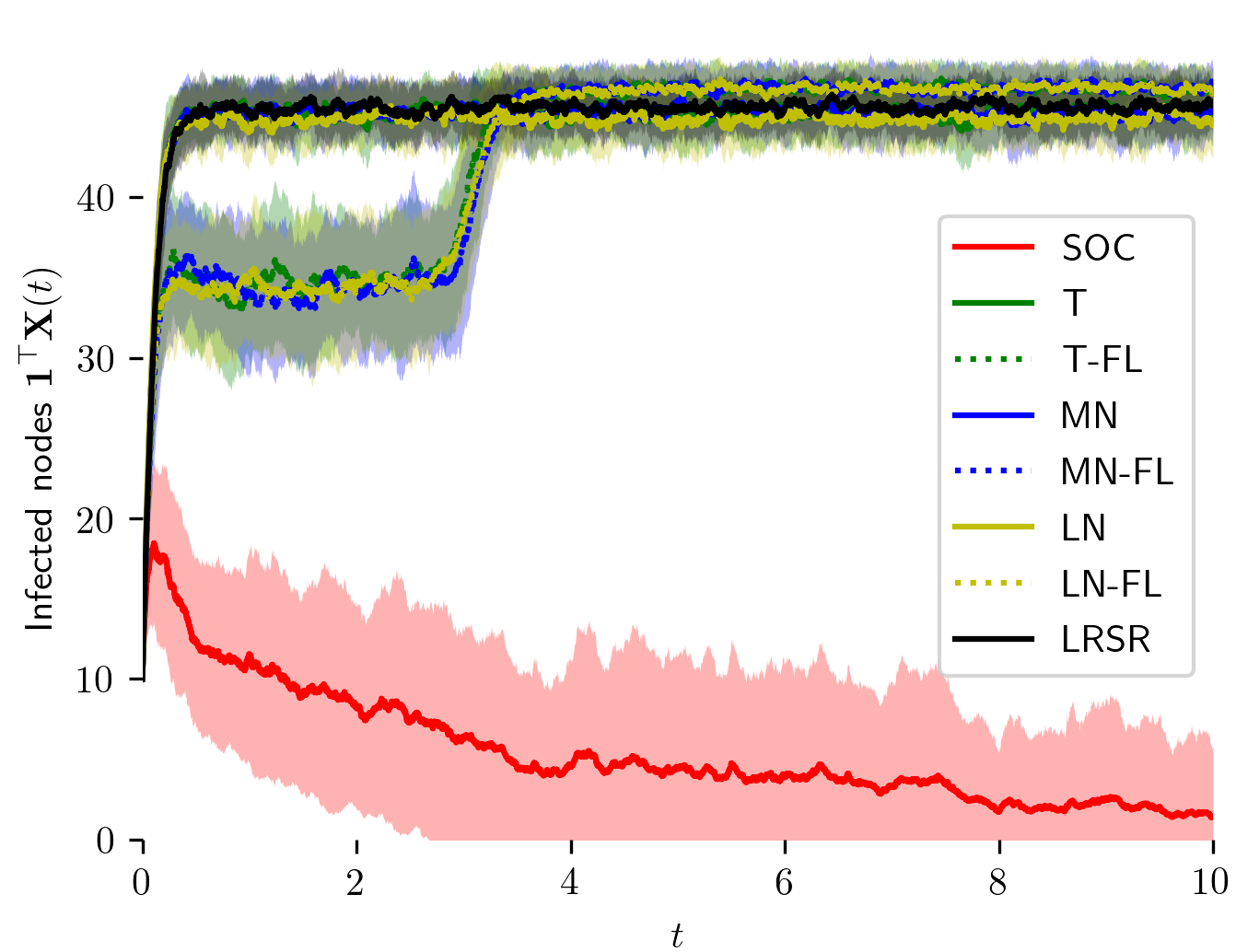}
\includegraphics[width=0.33\textwidth]{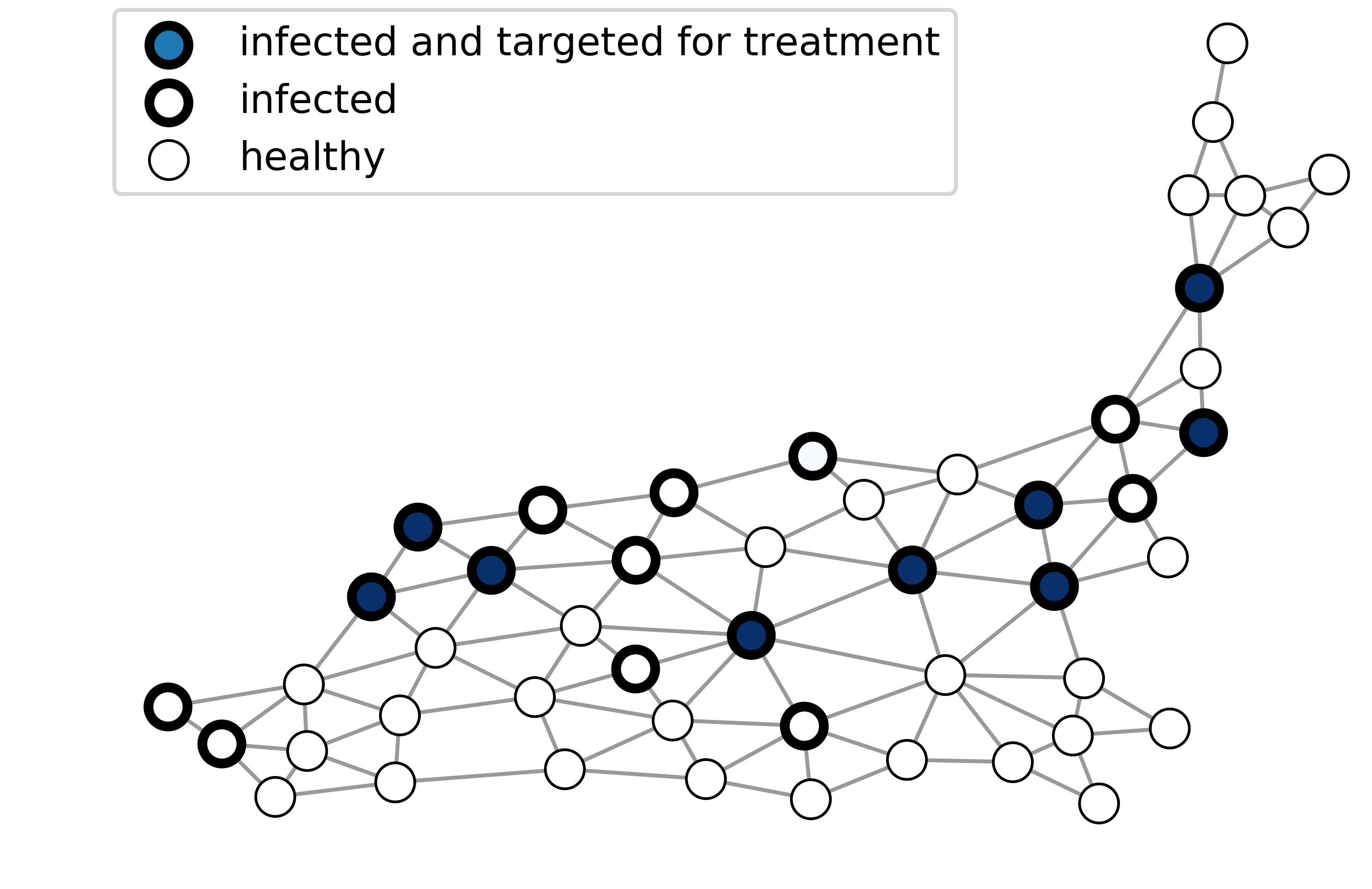}
\vspace{-3mm}
\caption{Performance of our stochastic optimal control algorithm [SOC] and the baselines. 
The left panel shows the total infection coverage $\int_{t_0}^{t_f} \mathbf{X}(t) dt$ against the parameter $\qb_X$ in our algorithm, where the
shaded regions indicate standard error of the mean.
The centered panel shows the number of infected nodes $\mathbf{1}^{\top} \mathbf{X}(t)$ against time, where the shaded regions indicate the $95$\%
confidence interval and $\qb_X = 400$.
The right panel shows a snapshot of the infected and healthy nodes as well as the nodes undergoing treatment, as picked by our algorithm, at a
particular time.
In all panels, we used $\beta = 6$, $\gamma = 5$, $\delta = 1$, $\rho = 5$, $\eta = 1$ and $\Qb_\Lambda = 1$.}
% , $\qb_X = 400$. Shown are the temporal evolution of infections, a snapshot of treatment intensities, and infection coverage for different values of $\qb_X$. The shaded regions indicate the 95\
% confidence interval and the standard error of the mean, respectively. The blue tone of nodes corresponds to $\Lambdab^*(t)$.}
\label{fig:results}
\vspace{-4mm}
\end{figure}

% Since our setting defines the \emph{intensity of treatment interventions} as the control signal rather than the recovery intensity itself, we could not yet fairly compare the algorithm against specific state-of-the-art methods.

\vspace{-1mm}
\section{Conclusion}
\label{sec:conclusion}
\vspace{-1mm}

In this paper, we approach disease modeling and control from the perspective of marked temporal point processes (MTPPs)
and stochastic optimal control of stochastic differential equations (SDEs) with jumps.
This novel perspective allows us to determine optimal online micro-interventions---individual treatments---as time goes on. While we have
applied our methodology to susceptible-infected-susceptible (SIS) epidemics, we believe that our methodology has the potential to guide 
the development of data-driven control strategies of other types of epidemic processes as well.

\newpage

% \section*{References}
{
\vspace{-10mm}
\small
\bibliographystyle{abbrv}
\bibliography{refs}
\vspace{-10mm}
}

\newpage

\appendix

\section{Proof of Lemma~\ref{lemma:diff}} \label{app:lemma:diff}
\begin{proof}
Starting from the definition of the optimal cost-to-go, we can proceed as follows:
\begin{align*}
&J(\Xb(t), \Hb(t), \Zb(t), \Mb(t), t)  \\
&=  \underset{\Lambdab(t, \infty)}{\text{min}} \EE_{(\Yb, \Wb, \Nb)(t, \infty)}\left[ \int_{t}^{t + dt}e^{-\eta \tau} \ell( \Lambdab(\tau), \Xb(\tau), \Hb(\tau), \Zb(\tau)) d\tau \right. \\
&\qquad \left.+ \int_{t + dt}^{\infty}e^{-\eta \tau}  \ell(\Lambdab(\tau), \Xb(\tau), \Hb(\tau) ,\Zb(\tau)) d\tau  \right] \\
&= \underset{\Lambdab(t, t + dt]}{\text{min}} \underset{\Lambdab(t + dt, \infty)}{\text{min}} \EE_{(\Yb, \Wb, \Nb)(t, t + dt]} \Bigg[  e^{-\eta t} \ell( \Lambdab(t), \Xb(t), \Hb(t) ,\Zb(\tau)) dt \\
&\qquad +  \EE_{(\Yb, \Wb, \Nb)(t + dt, \infty)}\left[ \int_{t + dt}^{\infty}e^{-\eta \tau}  \ell( \Lambdab(\tau), \Xb(\tau), \Hb(\tau) ,\Zb(\tau)) d\tau  \right] \Bigg] \\
&=  \underset{\Lambdab(t, t+dt]}{\text{min}} \left\{ \EE_{(\Yb, \Wb, \Nb)(t, t+dt]}\Big[ e^{-\eta t} \ell( \Lambdab(t), \Xb(t), \Hb(t) ,\Zb(\tau))dt \right. \\
& \left. \qquad \qquad\qquad\qquad\qquad\qquad\quad + J(\Xb(t + dt), \Hb(t + dt), \Zb(t + dt), \Mb(t + dt), t + dt)  \Big] \right\}
\end{align*}
%
% Hanson 6.2
The above implies that the optimal cost-to-go function satisfies Bellman'{}s Principle of Optimality~\cite{hanson2007}.
Moreover, if we assume $J$ is left-continuous, we have $J(\Xb(t+dt), \Hb(t + dt), \Zb(t + dt), \Mb(t + dt), t+dt) = J(\Xb(t), \Hb(t), \Zb(t), \Mb(t), t) + dJ(\Xb(t), \Hb(t), \Zb(t), \Mb(t), t)$ and can rewrite the above equation as:
%
% \begin{equation} \label{eq:bellman}
% 0 = \underset{\Lambdab(t, t+dt]}{\text{min}} \left\{ \EE_{(\Yb, \Wb, \Nb)(t, t+dt]}\left[ dJ(\Xb(t), \Hb(t), \Zb(t), \Mb(t), t) \right] + e^{-\eta t} \ell(\Lambdab(t), \Xb(t), \Hb(t))dt  \right\}
% \end{equation}
\begin{align}
0 = \underset{\Lambdab(t, t+dt]}{\text{min}} 
\Big\{ \EE_{(\Yb, \Wb, \Nb)(t, t+dt]}\left[ dJ(\Xb(t), \Hb(t), \Zb(t), \Mb(t), t) \right] + e^{-\eta t} \ell(\Lambdab(t), \Xb(t), \Hb(t))dt  \Big\}\label{eq:bellman}
\end{align}
\end{proof}
%
%%%%%%%% AUXILIARY LEMMA
%
\section{An Auxiliary Lemma} \label{app:lemma:aux}
\begin{lemma} \label{lemma:aux}
 The differential ($dJ(.)$) of the cost-to-go function $J(\Xb(t), \Hb(t), \Zb(t), \Mb(t), t)$ in Eq.~\ref{eq:def-cost-to-go} is given by
 \begin{align}\label{Jdifferentialxx}
% \begin{split}
&\hspace*{-0.1cm}dJ(\Xb(t), \Hb(t), \Zb(t), \Mb(t), t) = J_t(\Xb(t), \Hb(t), \Zb(t), \Mb(t), t ) dt \nn \\
&\hspace*{-0.4cm}+ \sum_{i \in \Vcal} \left[ J(\Xb(t) + \eb_i, \Hb(t), \Zb(t) + \Ab^\top \eb_i, \Mb(t), t) - J(\Xb(t), \Hb(t), \Zb(t), \Mb(t), t) \right] dY_i(t) \nn \\
&\hspace*{-0.4cm}+ \sum_{i \in \Vcal} \left[ J(\Xb(t) - \eb_i, \Hb(t) - \Hb(t) \odot \eb_i, \Zb(t) - \Ab^\top \eb_i, \Mb(t) - \Ab^\top(\Hb(t) \odot \eb_i), t) \right.\nn\\
&\hspace*{-0.4cm}\left.\qquad\quad- J(\Xb(t), \Hb(t), \Zb(t), \Mb(t), t) \right] dW_i(t) \nn\\
&\hspace*{-0.4cm}+ \sum_{i \in \Vcal} \left[ J(\Xb(t), \Hb(t) + \eb_i, \Zb(t), \Mb(t) + \Ab^\top \eb_i, t) - J(\Xb(t), \Hb(t), \Zb(t), \Mb(t), t) \right] dN_i(t)
% \end{split}
\end{align}
\end{lemma}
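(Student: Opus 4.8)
The plan is to treat Lemma~\ref{lemma:aux} as an instance of the change-of-variables (It\^o) formula for pure-jump, finite-variation processes, specialized to the setting where $J$ is a deterministic function of the discrete state $(\Xb(t), \Hb(t), \Zb(t), \Mb(t))$ and of time $t$, smooth in its time argument, and where the state itself is driven entirely by the counting processes $\Yb$, $\Wb$, $\Nb$ through the SDEs in Eq.~\ref{eq:sis-vars}. The central observation is that every state variable is piecewise constant in time: it changes only at the jump times of one of the counting processes. Consequently $J$ accumulates a continuous contribution solely through its explicit time argument, and a discrete contribution at each jump. There is no second-order It\^o correction here, because the driving processes carry no Brownian (continuous-martingale) component; the quadratic-variation term of the classical It\^o formula is therefore absent.

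First I would split $dJ$ into its continuous-in-time part and its jump part. Between consecutive jumps the arguments $\Xb, \Hb, \Zb, \Mb$ are frozen, so the only variation is $J_t(\Xb(t),\Hb(t),\Zb(t),\Mb(t),t)\,dt$, which supplies the first term of Eq.~\ref{Jdifferentialxx}. Next I would enumerate the three elementary jumps and, for each, read off the induced displacement of the full state directly from Eq.~\ref{eq:sis-vars}: a unit increment $dY_i(t)$ sends $\Xb\mapsto \Xb+\eb_i$ and $\Zb\mapsto \Zb+\Ab^\top\eb_i$ while leaving $\Hb,\Mb$ fixed; a unit increment $dW_i(t)$ sends $\Xb\mapsto\Xb-\eb_i$, $\Hb\mapsto\Hb-\Hb\odot\eb_i$, $\Zb\mapsto\Zb-\Ab^\top\eb_i$ and $\Mb\mapsto\Mb-\Ab^\top(\Hb\odot\eb_i)$; and a unit increment $dN_i(t)$ sends $\Hb\mapsto\Hb+\eb_i$ and $\Mb\mapsto\Mb+\Ab^\top\eb_i$, leaving $\Xb,\Zb$ fixed. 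For each jump I record the resulting increment of $J$ as the difference between its post-jump and pre-jump values, multiplied by the differential $dY_i(t)$, $dW_i(t)$, or $dN_i(t)$ of the process that fired.

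Summing the continuous part and the three families of jump increments over the nodes $i\in\Vcal$ then assembles exactly the right-hand side of Eq.~\ref{Jdifferentialxx}. To make this rigorous rather than heuristic I would appeal to the jump-diffusion It\^o calculus of~\cite{hanson2007}: decomposing each counting-process increment into its compensated-martingale part plus its predictable intensity, the displayed identity holds pathwise, and the structure of $dJ$ as drift-plus-jumps is exactly the one that formula guarantees.

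The step I expect to be the main obstacle is justifying that the jump contributions decompose additively and index cleanly by the three processes, i.e.\ that simultaneous jumps can be ignored and that cross terms such as $dY_i\,dW_j$ do not appear. This requires that $\Yb$, $\Wb$, $\Nb$ be simple and almost surely share no common jump times, so that at any instant at most one coordinate of one process jumps; under the absolutely continuous intensities of Eq.~\ref{eq:sis-rates} this holds with probability one, which is precisely what licenses the term-by-term summation. The remaining work---checking each post-jump state against Eq.~\ref{eq:sis-vars}, including the element-wise products and the adjacency factors $\Ab^\top\eb_i$---is careful bookkeeping but poses no conceptual difficulty.
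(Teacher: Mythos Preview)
Your proposal is correct and matches the paper's argument in substance: both derive Eq.~\ref{Jdifferentialxx} by isolating the $J_t\,dt$ drift from the explicit time dependence and then adding, for each counting process, the post-jump minus pre-jump value of $J$ with the state displacement read off from Eq.~\ref{eq:sis-vars}, relying on the zero-one jump law of~\cite{hanson2007} to rule out simultaneous jumps and kill the cross terms. The only cosmetic difference is that the paper starts from the raw increment $J(\Xb+d\Xb,\ldots,t+dt)-J(\Xb,\ldots,t)$ and expands via the zero-one law (so the $J_t\,dt$ term emerges from the ``no-jump'' branch together with $dt\,dY_i=dt\,dW_i=dt\,dN_i=0$), whereas you invoke the pure-jump It\^o formula directly; the content is the same.
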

\begin{proof}
Now, we differentiate $J$ with respect to time $t$, $\Xb(t)$, $\Hb(t)$, $\Zb(t)$, and $\Mb(t)$  using It\^{o}'{}s calculus. By the definition of the derivative we have
%
%\begin{align*}
%dJ(\Xb(t), \Hb(t), \Zb(t), \Mb(t), t) &= J(\Xb(t + dt), \Hb(t + dt), \Zb(t + dt), \Mb(t + dt), t + dt) \\
%&\qquad - J(\Xb(t), \Hb(t), \Zb(t), \Mb(t), t) \\
%&= J(\Xb(t) + d\Xb(t), \Hb(t) + d \Hb(t), \Zb(t) + d\Zb(t), \Mb(t) + d\Mb(t), t + dt) \\
%&\qquad - J(\Xb(t), \Hb(t), \Zb(t), \Mb(t), t)
%\end{align*}
\begin{align*}
&dJ(\Xb(t), \Hb(t), \Zb(t), \Mb(t), t)\\
&= J(\Xb(t) + d\Xb(t), \Hb(t) + d \Hb(t), \Zb(t) + d\Zb(t), \Mb(t) + d\Mb(t), t + dt) \\
&~~~~ - J(\Xb(t), \Hb(t), \Zb(t), \Mb(t), t)
\end{align*}
and substituting in the SDE dynamics of Eqs.~\ref{eq:sis-vars} we get
\begin{align*}
= \;  &J\Big(\Xb(t) + d\Yb(t) - d\Wb(t), \Hb(t) + d\Nb(t) - \Hb(t) \odot d\Wb(t),  \Zb(t) + \Ab^\top  d\Yb(t) - \Ab^\top d\Wb(t), \nn\\
  &\quad  \Mb(t) + \Ab^\top d \Nb(t) - \Ab^\top ( \Hb(t) \odot d\Wb(t)), t + dt \Big)  
- J \big(\Xb(t), \Hb(t), \Zb(t), \Mb(t), t\big)
\end{align*}
We now apply the zero-one jump law~\cite{hanson2007} to obtain 
% Zero-One Jump Law: 4.42
% Chain rule: Thm 4.47 + and following 
%
\begin{align*}
& dJ(\Xb(t), \Hb(t), \Zb(t), \Mb(t), t) \\
&=  \sum_{i \in \Vcal} J(\Xb(t) + \eb_i, \Hb(t), \Zb(t) + \Ab^\top \eb_i, \Mb(t), t + dt)  dY_i(t) \\
&\quad + \sum_{i \in \Vcal} J(\Xb(t) - \eb_i, \Hb(t) - \Hb(t) \odot \eb_i, \Zb(t) - \Ab^\top \eb_i, \Mb(t) - \Ab^\top ( \Hb(t) \odot \eb_i), t + dt) dW_i(t)\\
&\quad+ \sum_{i \in \Vcal} J(\Xb(t), \Hb(t) + \eb_i, \Zb(t), \Mb(t) + \Ab^\top \eb_i,  t + dt) dN_i(t) \\
&\quad+ J(\Xb(t), \Hb(t), \Zb(t), \Mb(t), t + dt) \left[ 1 - \sum_{i \in \Vcal} (dY_i(t) + dW_i(t) + dN_i(t))\right] \\
&\quad- J(\Xb(t), \Hb(t), \Zb(t), \Mb(t), t)
\end{align*}
where we used $\forall i \in \Vcal:  dY_i dW_i = 0, dY_i dN_i = 0, dW_i dN_i = 0, dY_i dW_i dN_i = 0$ by the zero-one jump law \cite{hanson2007}. 

We simplify further, and as an intermediate step, we use the total derivative rule, \ie $J(., t + dt) =  J(., t)  +  J_t(., t)dt$, to rewrite the expression above. 

Using that the bilinear differential form is $dtdY_i = dtdW_i = dtdN_i = 0$ \cite{hanson2007}, we finally obtain the differential for $J$ as it is stated in the lemma.
%\begin{align*}
%\begin{split}
%&dJ(\Xb(t), \Hb(t), \Zb(t), \Mb(t), t) = J_t(\Xb(t), \Hb(t), \Zb(t), \Mb(t), t ) dt \\
%&\quad+ \sum_{i \in \Vcal} \left[ J(\Xb(t) + \eb_i, \Hb(t), \Zb(t) + \Ab \eb_i, \Mb(t), t) - J(\Xb(t), \Hb(t), \Zb(t), \Mb(t), t) \right] dY_i(t) \\
%&\quad+ \sum_{i \in \Vcal} \left[ J(\Xb(t) - \eb_i, \Hb(t) - \Hb(t) \odot \eb_i, \Zb(t) - \Ab \eb_i, \Mb(t) - \Ab(\Hb(t) \odot \eb_i), t)\right. \\
%&\left.\qquad \qquad - J(\Xb(t), \Hb(t), \Zb(t), \Mb(t), t) \right] dW_i(t)\\
%&\quad+ \sum_{i \in \Vcal} \left[ J(\Xb(t), \Hb(t) + \eb_i, \Zb(t), \Mb(t) + \Ab \eb_i, t) - J(\Xb(t), \Hb(t), \Zb(t), \Mb(t), t) \right] dN_i(t)
%\end{split}
%\end{align*}
\end{proof}
%
%%%%%%%% LEMMA 3
%
%
\section{Proof of Lemma~\ref{lemma:hjb}} \label{app:lemma:hjb}
\begin{proof}
First, we substitute Eq.~\ref{Jdifferentialxx}, given by auxiliary Lemma~\ref{lemma:aux} (refer to Appendix~\ref{app:lemma:aux}), into 
Eq.~\ref{eq:bellman} of Lemma \ref{lemma:diff} (refer to Appendix~\ref{app:lemma:diff}).
Note that only the last two terms in the minimization over $\Lambdab(t, t+ dt]$ depend on the control intensity $\Lambdab(t)$. Hence, we have
\begin{align}\label{eq:cost-to-go-J-hjb}
\begin{split}
0 &= J_t(\Xb(t), \Hb(t), \Zb(t), t ) dt   \\
&\textstyle \quad+ \sum_{i \in \Vcal} \left[ J(\Xb(t) + \eb_i, \Hb(t), \Zb(t) + \Ab^\top \eb_i, \Mb(t), t) \right. \\
&\textstyle\qquad \qquad \quad \left. - J(\Xb(t), \Hb(t), \Zb(t), \Mb(t), t) \right] (1-X_i(t)) \beta Z_i(t) dt  \\
&\textstyle\quad+ \sum_{i \in \Vcal} \left[ J(\Xb(t) - \eb_i, \Hb(t) - \Hb(t) \odot \eb_i, \Zb(t) - \Ab^\top \eb_i, \Mb(t) - \Ab^\top( \Hb(t) \odot \eb_i), t) \right. \\
&\textstyle \qquad \qquad \quad \left. - J(\Xb(t), \Hb(t), \Zb(t), \Mb(t), t) \right] X_i(t) ( \delta + \rho H_i(t) ) dt \\
&\textstyle\quad+  \underset{\Lambdab(t, t+dt]}{\text{min}} \Big\{ \sum_{i \in \Vcal} \left[ J(\Xb(t), \Hb(t) + \eb_i, \Zb(t), \Mb(t) + \Ab^\top\eb_i, t) \right. \\
&\textstyle \left. \qquad\qquad\qquad\quad \quad \quad - \; J(\Xb(t), \Hb(t), \Zb(t), \Mb(t), t) \right] \lambda_i (t) X_i(t) (1 - H_i(t)) dt \\
&\textstyle\qquad \qquad \qquad \quad + \; e^{-\eta t}\ell(\Lambdab(t), \Xb(t), \Hb(t))dt  \Big\} 
\end{split}
\end{align}
Since we optimize over an \emph{infinite} time horizon, the optimal cost can be expressed in present value terms, independent of $t$. Recall the definition of the optimal \emph{cost-to-go} function $J$ from Eq. \ref{eq:def-cost-to-go}
\begin{align}
\begin{split}\label{eq:def-cost-function}
&J(\Xb(t), \Hb(t), \Zb(t), \Mb(t), t) \\
&= \underset{\Lambdab(t, \infty)}{\text{min}} \EE_{(\Yb, \Wb, \Nb)(t, \infty)}\left[ \int_{t}^{\infty} e^{-\eta \tau} \; \ell(\Lambdab(\tau), \Xb(\tau), \Hb(\tau), \Zb(\tau)) d\tau \right]  \\
&= e^{-\eta t} \, \underset{\Lambdab(t, \infty)}{\text{min}} \EE_{(\Yb, \Wb, \Nb)(t, \infty)}\left[ \int_{t}^{\infty} e^{-\eta (\tau - t)} \; \ell(\Lambdab(\tau), \Xb(\tau), \Hb(\tau), \Zb(\tau)) d\tau \right]  \\[3pt]
&= e^{-\eta t} \,\,\, V(\Xb(t), \Hb(t), \Zb(t), \Mb(t))
\end{split}
\end{align}
where 
$$ \hspace*{-0.1cm}V(\Xb(t), \Hb(t), \Zb(t), \Mb(t)) =  \hspace*{-0.1cm} \underset{\Lambdab(t, \infty)}{\text{min}} \EE_{(\Yb, \Wb, \Nb)(t, \infty)}\left[ \int_{t}^{\infty} \hspace*{-0.2cm} e^{-\eta (\tau - t)} \ell(\Lambdab(\tau), \Xb(\tau), \Hb(\tau), \Zb(\tau)) d\tau \right]$$ now is the time-independent cost function. The value of the integral depends on the initial states, not on the initial time, i.e. only on the elapsed time \cite{kamienschwartz}. We substitute Eq. \ref{eq:def-cost-function} into Eq. \ref{eq:cost-to-go-J-hjb} and use the fact that $J_t(.) = - \eta e^{- \eta t} V(.)$. Multiplying both sides by $e^{\eta t}$ finally yields the Hamilton-Jacobi-Bellman (HJB) equation of stochastic dynamic programming
\begin{align}
\begin{split}\label{eq:hjb}
0 &= - \eta V(\Xb(t), \Hb(t), \Zb(t), \Mb(t)) dt   \\
&\textstyle \quad+ \sum_{i \in \Vcal} \left[ V(\Xb(t) + \eb_i, \Hb(t), \Zb(t) + \Ab^\top \eb_i, \Mb(t)) \right. \\
&\left.\qquad \qquad \quad- V(\Xb(t), \Hb(t), \Zb(t), \Mb(t)) \right] (1-X_i(t)) \beta Z_i(t) dt  \\
&\textstyle\quad+ \sum_{i \in \Vcal} \left[ V(\Xb(t) - \eb_i, \Hb(t) - \Hb(t) \odot \eb_i, \Zb(t) - \Ab^\top \eb_i, \Mb(t) - \Ab^\top( \Hb(t) \odot \eb_i),) \right. \\
&\textstyle \qquad \qquad \quad \left. - V(\Xb(t), \Hb(t), \Zb(t), \Mb(t)) \right] X_i(t) ( \delta + \rho H_i(t) ) dt \\
&\textstyle\quad+  \underset{\Lambdab(t, t+dt]}{\text{min}} \Big\{ \sum_{i \in \Vcal} \left[ V(\Xb(t), \Hb(t) + \eb_i, \Zb(t), \Mb(t) + \Ab^\top\eb_i) \right. \\
&\textstyle \left. \qquad\qquad\qquad\quad \quad \quad - \; V(\Xb(t), \Hb(t), \Zb(t), \Mb(t)) \right] \lambda_i (t) X_i(t) (1 - H_i(t)) dt \\
&\textstyle\qquad \qquad \qquad \quad + \; \ell(\Lambdab(t), \Xb(t), \Hb(t))dt  \Big\} 
\end{split}
\end{align}
%
% To solve the above equation, we first need to choose functional forms for the instantaneous cost function $\ell$. Following previous work on optimal control of SDEs with jumps, % TODO, references missing
% we consider the following functional form:
% %
% \begin{align*}
% \ell(\Lambdab(t), \Xb(t)) &= \tfrac{1}{2} \Lambdab^\top(t) \Qb_{\Lambda} \Lambdab(t) +   \qb_{\Xb} \Xb(t) 
% \end{align*}
% %
% where $\Qb_{\Lambdab}$ is a given diagonal matrices with $(\Qb_{\Lambdab})_{ii} \geq 0$ and $\qb_{\Xb}$ is a given vector with $(\qb_{\Xb})_{i} \geq 0$. 
% 
% The loss function $\ell$ penalizes the squared control intensity over all nodes, which is a common way to express the cost of controlling a dynamical system in optimal control problems [citations needed]. In addition, infected nodes $\Xb$ carry a linear state cost since it is the quantity we ultimately aim to minimize. 
Now, for further notational simplicity, we define the diagonal matrices:
\begin{align*}
&\big(\Delta_{V}^Y(\Xb(t), \Hb(t), \Zb(t), \Mb(t)) \big)_{ii} \\
&=~ V(\Xb(t) + \eb_i,\Hb(t), \Zb(t) + \Ab^\top \eb_i, \Mb(t), t) - V(\Xb(t), \Hb(t),\Zb(t), \Mb(t)) \\[2mm]
&\big(\Delta_{V}^W(\Xb(t), \Hb(t), \Zb(t), \Mb(t)) \big)_{ii} \\
&=~ V(\Xb(t) - \eb_i, \Hb(t) - \Hb(t) \odot \eb_i, \Zb(t) - \Ab^\top \eb_i, \Mb(t) - \Ab^\top ( \Hb(t) \odot \eb_i)) \\
& ~~~~~- V(\Xb(t), \Hb(t),\Zb(t), \Mb(t)) \\[2mm]
&\big(\Delta_{V}^N(\Xb(t), \Hb(t),\Zb(t), \Mb(t)) \big)_{ii} \\
&=~ V(\Xb(t), \Hb(t) + \eb_i, \Zb(t), \Mb(t) + \Ab^\top \eb_i) - V(\Xb(t), \Hb(t),\Zb(t), \Mb(t)) 
\end{align*}
and $\forall i,j$ s.t. $i \neq j$, $\big(\Delta_{V}^Y (., ., ., .)\big)_{ij} = \big(\Delta_{V}^W (., ., ., .)\big)_{ij} = \big(\Delta_{V}^N (., ., ., .)\big)_{ij} = 0$. From now on, we will omit the arguments of $\Delta_{V}^Y$, $\Delta_{V}^W $, and $\Delta_{V}^N$.
Under these definitions, if we use the functional form of $\ell(\Lambdab(t), \Xb(t))$ in Eq.~\ref{eq:loss}, the control signal $\Lambdab(t)$ can be obtained by solving the minimization in the HJB equation
\begin{align}
\underset{\Lambdab(t_0, \infty)}{\text{minimize}} & \quad \Lambdab^\top(t) \,\diag \big(\mathbf{1} - \Hb(t) \big) \, \Delta_{V}^N \Xb(t)  + \tfrac{1}{2} \Lambdab^\top(t) \Qb_{\Lambdab} \Lambdab(t) + \qb_{\Xb} \Xb(t)  \nonumber \\
\text{subject to} & \quad \lambda_i(t) \geq 0 \quad \forall i \in \Vcal, \forall t \in (t_0, \infty), \label{eq:hjb-minimization}
\end{align}

By differentiating Eq.~\ref{eq:hjb-minimization}, with respect to $\Lambdab(t)$, the solution to the \emph{unconstrained} minimization is given by:
\begin{equation}\label{eq:lambda-sol-unconstr}
\Lambdab^*(t) = - \Qb_{\Lambdab}^{-1} \diag \big(\mathbf{1} - \Hb(t) \big) \, \Delta_{V}^N  \, \Xb(t) 
\end{equation}
%
% which is the same as the solution to the constrained problem since $(\Delta_{V}^N (\Xb(t), \Hb(t), \Zb(t), \Mb(t), t))_{ii} \leq 0$ by Lemma XX as proven in the appendix. \rlx{TODO}, $X_i(t) \geq 0$ as $X_i(t) \in \{0,1\}$, $1 - H_i(t) \geq 0$ as $H_i(t) \in \{ 0,1\}$,  
which is the same as the solution to the constrained problem since we find a valid solution with $(\Delta_{V}^N (\Xb(t), \Hb(t), \Zb(t), \Mb(t)))_{ii} \leq 0$ in Theorem \ref{thm:finalxx}, see appendix \ref{app:thm:finalxx}, 
$X_i(t) \geq 0$ as $X_i(t) \in \{0,1\}$, $1 - H_i(t) \geq 0$ as $H_i(t) \in \{ 0,1\}$,  
and $(\Qb_{\Lambdab})_{ij} \geq 0$ by definition.

Finally, if we combine Eqs.~\ref{eq:hjb} and~\ref{eq:lambda-sol-unconstr}, we find that the optimal cost function $V$ needs to satisfy Eq.~\ref{eq:pde}, which concludes
the proof.
\end{proof}
\section{Proof of Theorem~\ref{thm:finalxx}} \label{app:thm:finalxx}
\begin{proof}
Assume the functional form of the optimal cost function $V$ is given by:
\begin{align}
\begin{split}
V(\Xb(t), \Hb(t), \Zb(t), \Mb(t)) &= \bb^\top \Xb(t) + \cb^\top \Hb(t) + \db^\top \Zb(t) + \fb^\top \Mb(t)
\end{split}
\end{align}
where $\bb, \cb, \db, \fb$ are constants with respect to time as $V$ is independent of time $t$. Then, given this functional form, we obtain the following explicit expressions for the jump terms
\begin{align*}
\Delta^Y_V &= \diag \big( \bb + \Ab \db ) \\
\Delta^W_V &= \diag \big(- \bb -\Hb(t) \odot  \cb - \Ab \, \db - \Hb(t) \odot \Ab\fb \big) \\
\Delta^N_V &= \diag \big(\cb + \Ab \fb \big)
\end{align*}
Using Eq. (\ref{eq:pde}), the optimal cost function $V$ has to satisfy
\begin{align}\label{eq:pde-sub}
\begin{split}
0 &= - \eta\bb^\top \Xb(t)  - \eta \cb^\top \Hb(t) - \eta\db^\top \Zb(t) - \eta \fb^\top \Mb(t)  \\
&\quad+  (\mathbf{1}-\Xb(t))^\top\diag \big( \bb + \Ab \db )  \, (\beta\Zb(t) - \gamma  \Mb(t) )\\
&\quad+ ( \delta \mathbf{1} + \rho\Hb)^\top  \diag \big(- \bb -\Hb(t) \odot  \cb - \Ab \, \db - \Hb(t) \odot \Ab \fb \big)   \,\Xb(t) \\
&\quad- \tfrac{1}{2} \Xb(t)^\top \big( \diag \big(\cb + \Ab \fb \big) \big)^2   \diag\big( \mathbf{1} - \Hb(t) \big)^2 \Qb_{\Lambdab}^{-1}  \: \Xb(t)  + \qb_{\Xb} \Xb(t)  
\end{split}
\end{align}
To find the unknown constant $\bb, \cb, \db, \fb$ such that $V$ satisfies the PDE, we equate the coefficients of the different variables $\Xb(t), \Hb(t), \Zb(t), \Mb(t)$ with zero. We use the fact that in the domain of $V$, we have $\Xb(t)^\top \Xb(t) = \mathbf{1}^\top \Xb(t) $ and $\Hb(t)^\top \Hb(t) =\mathbf{1}^\top \Hb(t)$ since $X_i(t), H_i(t) \in \{0,1\}$. Likewise, $H_i(t) = 1 \Rightarrow X_i(t) = 1$, hence $\Xb(t)^\top \Hb(t)  = \mathbf{1}^\top\Hb(t)$. Thus, rewriting the PDE, we obtain
\begin{subequations}
\begin{align}
0 &=  \Big( - \eta \bb + \delta(-\bb - \Ab \db) - \tfrac{1}{2} \mathbf{1}^\top \Qb_\Lambda^{-1} (\cb + \Ab \fb)^2 + \qb_X  \Big) \Xb(t)\label{eq:pde-factored-X} \\ 
& +  \Big ( - \eta \db + \beta ( \bb + \Ab \db ) \Big ) \Zb(t) 
+ \Xb(t)^\top \Big (- \beta( \bb + \Ab \db)  \Big )\Zb(t)  \label{eq:pde-factored-Z} \\
& +  \Big ( - \eta \fb - \gamma ( \bb + \Ab \db ) \Big ) \Mb(t) 
+ \Xb(t)^\top \Big ( \gamma ( \bb + \Ab \db)  \Big )\Mb(t)  \label{eq:pde-factored-M} \\
& + \Hb(t)^\top \Big ( - \eta \diag (\cb) -(\delta+ \rho) \diag(\cb + \Ab \fb) \nn\\
&\qquad \qquad \quad - (\delta+ \rho) \diag (\bb + \Ab \db) + \tfrac{1}{2} \Qb_\Lambda^{-1} \diag(\cb + \Ab \fb)^2   \Big ) \Xb(t) \label{eq:pde-factored-HX}
\end{align}
\end{subequations}
 To satisfy the PDE, all coefficients must be zero. To make the $6 |\Vcal|$ equations solvable with $4 |\Vcal|$ variables and ensure $\Delta_V^N =  \diag(\cb + \Ab \fb) \leq 0$ to satisfy the constrained optimization problem of Eq. \ref{eq:hjb-minimization}, we take advantage of the fact that $\Xb \in \{0,1\}^{|\Vcal|}$ to distinguish two cases for every indicator variable: $\Xb_i(t) = 0$ and $\Xb_i(t) = 1$. Given either condition, terms of the PDE group differently and we get the following system of equations for the $i$-th indices of the constants:

\emph{If} $\Xb_i(t) = 0$:
\begin{subequations}
\begin{align}
 - \eta \db_i + \beta ( \bb_i + \Ab_{i*} \db ) & = 0 \label{eq:cases-X0-a} \\
 \db_i + \tfrac{\beta}{\gamma} \fb_i & = 0  \label{eq:cases-X0-b} 
\end{align}
\end{subequations}
\emph{If} $\Xb_i(t) = 1$:
\begin{subequations}
\begin{align}
 - \eta \db_i & = 0 \label{eq:cases-X1-a} \\
 - \eta \fb_i & = 0 \label{eq:cases-X1-b} \\
 - \eta \cb_i - (\delta+ \rho) (\cb_i + \Ab_{i*} \fb) - (\delta+ \rho) (\bb_i + \Ab_{i*} \db) + \tfrac{1}{2} \Qb_\Lambda^{-1} (\cb_i + \Ab_{i*} \fb)^2 & = 0  \label{eq:cases-X1-c} \\
 - \eta \bb_i - \delta(\bb_i + \Ab_{i*} \db) - \tfrac{1}{2} \Qb_{\Lambda,i}^{-1} (\cb_i + \Ab_{i*} \fb)^2 + \qb_{X,i}  & = 0  \label{eq:cases-X1-d}
\end{align}
\end{subequations}
First off, note that if $\Xb_i(t) = 0$, there are \emph{no} equations determining $\cb_i$, and only $(\Delta_V^N)_{ii} = \cb_i + \Ab_{i*} \fb \leq 0$ has to hold from the optimization problem of Eq. \ref{eq:hjb-minimization}. This is fine as $\Xb_i(t) = 0$ implies $\Hb_i(t) = 0$, and $\cb_i$ appears as a factor of $\Hb_i(t)$ both in the optimal cost function $V$ and the optimal intensity $\Lambdab^*$. In other words, the optimal intensity will be zero for node $i$ anyways if $\Xb_i(t) = 0$. Hence, to ensure the constraint, let $\cb_i = - \Ab_{i*} \fb$ if $\Xb_i(t) = 0$.

From Eqs. \ref{eq:cases-X1-a} and \ref{eq:cases-X1-b} we know that if $\Xb_i(t) = 1$, then $\db_i = 0$ and $\fb_i = 0$. Thus, $\db_i$ and $\fb_i$ are only non-zero if $\Xb_i(t) = 0$ and we can rewrite Eqs. \ref{eq:cases-X0-a} and \ref{eq:cases-X0-b} as
\begin{align}
 \bb_{(0)} & = \left ( \tfrac{\eta}{\beta} \Ib - \Ab_{(00)} \right) \db_{(0)} \label{eq:cases-b-0}\\
\fb_{(0)} & = -\tfrac{\gamma}{\beta}  \db_{(0)} \label{eq:cases-f-0}
\end{align}
where $\Ab_{(00)}$ is the matrix that contains the $i$-th rows and $i$-th columns of the adjacency matrix $\Ab$ if and only if $\Xb_i(t) = 0$. Thus, $\Ab_{(00)} \in \mathbb{R}^{m \times m}$ where $m = |\Vcal| - |\Xb(t) |_1$. Likewise,  $ \bb_{(0)} $, $\db_{(0)}$, $\fb_{(0)}$ are the vectors $\bb$, $\db$, $\fb$ containing only indices where $\Xb_i(t) = 0$, and we know $\db_{(1)} = 0$, $\fb_{(1)} = 0$. The same notation is used when rewriting the two remaining Eqs. \ref{eq:cases-X1-c} and \ref{eq:cases-X1-d} in the following, already using Eqs. \ref{eq:cases-X1-a}, \ref{eq:cases-X1-b}, \ref{eq:cases-f-0}
\begin{align}
 -\eta \cb_{(1)} - (\delta + \rho) \left(\cb_{(1)} -\tfrac{\gamma}{\beta} \Ab_{(10)} \db_{(0)} \right)  - (\delta + \rho) \left(\bb_{(1)} + \Ab_{(10)} \db_{(0)} \right) &\nn\\
 + \tfrac{1}{2}\Qb_{\Lambda, (11)}^{-1} \left (\cb_{(1)} -\tfrac{\gamma}{\beta} \Ab_{(10)} \db_{(0)} \right )^2 & = 0 \label{eq:cases-bc-1}\\
- \eta \bb_{(1)} - \delta ( \bb_{(1)} + \Ab_{(10)} \db_{(0)}) - \tfrac{1}{2}\Qb_{\Lambda, (11)}^{-1} \left (\cb_{(1)} -\tfrac{\gamma}{\beta} \Ab_{(10)} \db_{(0)} \right )^2 + \qb_{X, (1)} & = 0\label{eq:cases-bc-2}
\end{align}
where $()^2$ is the element-wise square of the vector. $\Ab_{(10)}$ is the matrix that contains the $i$-th rows of $\Ab$ iff $\Xb_i(t) = 1$ and $i$-th columns of $\Ab$ iff $\Xb_i(t) = 0$. Now, all constants can be determined to find the proposal $V$ that satisfies the HJB; however, note that the above system of equations is overdetermined. This additional degree of freedom is necessary to ensure that $\Delta_V^N = \diag ( \cb + \Ab \fb ) \leq 0$ for all possible system parameters.

Thus, we express all constants as expressions of $\db_{(0)}$ and choose one particular solution of our solution space which is optimal with respect to our loss function. Specifically, \emph{we choose the policy that achieves the optimum under minimal treatment interventions in the population}.

Eqs. \ref{eq:cases-bc-1} and \ref{eq:cases-bc-2} can be solved in terms of $\bb_{(1)}$ and $\cb_{(1)}$ in closed-form. Ultimately, we obtain the following expression for the term $\Delta^N_{V, (1)} = \diag (  \cb_{(1)} + \Ab_{(10)} \fb_{(0)}  )$ which determines the optimal policy $\Lambdab^*(t)$:
\begin{align}
&\cb_{(1)} + \Ab_{(10)} \fb_{(0)}  = \tfrac{1}{\beta (2 \delta + \eta + \rho)} \Bigg ( \beta(\delta + \eta) ( \delta + \eta + \rho) \Qb_{\Lambda} \mathbf{1} \\
&\hspace*{-0.3cm}\pm \sqrt{ 2 \beta \Qb_\Lambda (2 \delta \hspace*{-0.05cm} +\hspace*{-0.05cm} \eta \hspace*{-0.05cm}+\hspace*{-0.05cm} \rho)\Big [ \eta \Ab_{(10)} \db_{(0)} (\gamma (\delta \hspace*{-0.05cm} +\hspace*{-0.05cm}  \eta) \hspace*{-0.05cm} +\hspace*{-0.05cm}  \beta (\delta \hspace*{-0.05cm} + \hspace*{-0.05cm} \rho)) \hspace*{-0.05cm} +\hspace*{-0.05cm}  \beta (\delta \hspace*{-0.05cm} +\hspace*{-0.05cm}  \rho) \qb_X \Big]\hspace*{-0.05cm} + \hspace*{-0.05cm} \beta^2 (\delta \hspace*{-0.05cm} + \hspace*{-0.05cm} \eta)^2 (\delta\hspace*{-0.05cm}  + \hspace*{-0.05cm} \eta \hspace*{-0.05cm} +\hspace*{-0.05cm}  \rho)^2 \Qb_\Lambda^2  \mathbf{1} } \Bigg ) \nonumber
\end{align}
where the subscripts of $\Qb_{\Lambda, (11)}, \qb_{X, (1)}$ are implied. Clearly, we choose the negative solution of $\cb_{(1)}$ to Eqs. \ref{eq:cases-bc-1} and \ref{eq:cases-bc-2}. Then, as long as the expression under the radical is positive, we have a feasible solution to our constrained optimization problem of Eq. \ref{eq:hjb-minimization} and found a closed-form expression for the policy. $\Delta^N_{V, (0)}$ is not important as it is a factor of $\Xb(t)$ with $\Xb_i(t) = 0$ at those entries and will thus be zero in all cases. Note that here $\Delta^N_{V, (0)} = 0$ since we chose $\cb_{(0)} = - \Ab_{(00)} \fb_{(0)}$ above.

It remains to show how we choose the \emph{policy that achieves the optimum under minimal treatment interventions in the population}. The positivity condition of the radical can be written as
\begin{align}
\Ab_{(10)} \db_{(0)} \geq - \tfrac{\beta (\delta + \rho)}{ \eta (\gamma (\delta + \eta) + \beta (\delta + \rho))} \qb_{X,(1)} \label{eq:radical-condition}
\end{align}
If Eq. \ref{eq:radical-condition} holds, then the radical expression is real and we have a valid optimal policy $\Lambdab^*$. Moreover, if the inequality is \emph{binding}, then $\Delta^N_{V, (1)} = 0$ and thus $\Lambdab^* = 0$, i.e. the policy doesn't intervene. We thus aim to satisfy the above constraints and principle of optimality \emph{achieving minimal interventions}. This means, at every time $t$ that $\Xb(t)$ changes, the optimal intensity $\Lambdab^*(t)$ is recomputed online by solving the following optimization problem
\begin{align}
\min_{\db_{(0)}} \quad& | \Ab_{(10)} \db_{(0)} - k \cdot \qb_{X,(1)}  |_1 \nonumber \\
\text{s.t.}\quad& \Ab_{(10)} \db_{(0)} \geq k \cdot \qb_{X,(1)} \label{eq:policy-optimization}
\end{align}
where $k = - \tfrac{\beta (\delta + \rho)}{ \eta (\gamma (\delta + \eta) + \beta (\delta + \rho))}$. This can be formulated as an LP as $
\min_{\zb, \db_{(0)}} \mathbf{1}^\top \zb  $, such that 
$ \zb = \Ab_{(10)} \db_{(0)} - k \cdot \qb_{X,(1)}$ and 
$\Ab_{(10)} \db_{(0)} \geq k \cdot \qb_{X,(1)} $, and common optimization algorithms apply.

The cost-to-go $V$ satisfies the PDE from Eq. \ref{eq:pde} with the found constants $\bb, \cb, \db, \fb$, which concludes the proof.
\end{proof}

%
%%%%%%%% ALGORITHM
%

\newpage

\section{Online stochastic optimal control algorithm} \label{app:algorithm}

In our experiments using synthetic data, every time that any of the state variables change, we update the treatment, infection, and recovery intensities. Then, we determine the next event by first sampling its time and subsequently the node and type it corresponds to. 
Algorithm~\ref{alg:sampling} summarizes our procedure.

\begin{algorithm}
  \caption{Online stochastic optimal control on synthetic data} \label{alg:sampling}
    \hspace*{\algorithmicindent} \textbf{Input:} network $\Ab$, epidemic parameters $\beta, \gamma, \rho, \delta$, control parameters $\Qb_\Lambda, \qb_X, \eta$;\\
    \hspace*{\algorithmicindent}~~~~~~~~~~~~ state variables $\Xb(t), \Hb(t)$\\
    \hspace*{\algorithmicindent} \textbf{Output:} time until next event $t^*$, node of next event $u^*$ 
    \begin{algorithmic}[1]
 \State Initialize $\db, \Lambdab^*(t) \gets \mathbf{0}$
\State $K^{(\text{I})} \gets \beta (2\delta + \eta + \rho) $
\State $K^{(\text{II})}_{\Lambda, i} \gets \beta (\delta + \eta) (\delta + \eta + \rho)  \Qb_{\Lambda, i} $
\State $K^{(\text{III})} \gets \eta ( \gamma ( \delta + \eta ) + \beta ( \delta + \rho ) ) $
\State $K^{(\text{IV})}_{X, i} \gets \beta (\delta + \rho) \qb_{X, i}$ 
\State $K^{(\text{V})} \gets  \gamma \eta + \beta ( 2\delta + \rho)$ 
\State Find $\db_{(0)} \coloneqq \db[X_i(t) = 0]$ by optimization of
\Statex$\min_{\db_{(0)}} \quad \left| \Ab_{(10)} \db_{(0)} + \tfrac{1}{K^{(\text{III})} } \bm{K}^{(\text{IV})}_{X, (1)}  \right|_1 \quad \text{s.t.}\quad \Ab_{(10)} \db_{(0)} \geq - \tfrac{1}{K^{(\text{III})} } \bm{K}^{(\text{IV})}_{X, (1)} $

%\For{$i \in \Vcal$}
%	\If{$X_i(t) = 0$}
%	            \State $\bb_i = \Big(\tfrac{\eta}{\beta} \mathbf{1}^\top - \Ab_i^\top\Big) \db  $
%		   \State $\cb_i= \tfrac{\gamma}{\beta} \Ab_i^\top \db $
%		   \State $\fb_i = - \tfrac{\gamma}{\beta} \db $
%		   
%		   
%	\Else
%	            \State $ \bb_i = - \tfrac{K^{(\text{V})}}{K^{(\text{I})}}  \Ab_{i}^\top \db  +\tfrac{\beta}{K^{(\text{I})}} \qb_{X, i}     $
%	            \Statex $~~~~~~~~~~~~~~~~~~~~~~~~ - \tfrac{\beta (\delta + \eta + \rho) }{(K^{(\text{I})})^2 }  \Big[ K^{(\text{II})}_{\Lambda, i} - \sqrt{ 2 K^{(\text{I})} \Qb_{\Lambda, i} \left ( K^{(\text{III})} \Ab_i^\top \db  + K^{(\text{IV})}_{X, i} \right ) + \left(K^{(\text{II})}_{\Lambda, i}\right)^2  } \Big]$
%		\State $\cb_i = \tfrac{\gamma}{\beta} \Ab_i^\top \db  + \tfrac{1}{K^{(\text{I})}} \Big[  K^{(\text{II})}_{\Lambda, i}  - \sqrt{ 2 K^{(\text{I})} \Qb_{\Lambda, i} \left ( K^{(\text{III})} \Ab_i^\top \db  + K^{(\text{IV})}_{X, i} \right )+ \left(K^{(\text{II})}_{\Lambda, i}\right)^2    } \Big ] $
%		
%		 \State $\fb_i = 0$
%	\EndIf
%\EndFor

\For{$i \in \Vcal$}
	            \State $\lambda_{N_i}(t) \gets - \tfrac{X_i(t)(1-H_i(t))}{K^{(\text{I})}} \Qb_{\Lambda, i}^{-1} \Big [ K^{(\text{II})}_{\Lambda, i}  - \sqrt{  2 K^{(\text{I})} \Qb_{\Lambda, i} \big ( K^{(\text{III})} \Ab_{i*} \db  + K^{(\text{IV})}_{X, i} \big )+ \big(K^{(\text{II})}_{\Lambda, i}\big)^2 } \Big ] $
		\State $\lambda_{Y_i}(t) = (1-X_i(t))(\beta \Ab_{*i}^\top \Xb(t) - \gamma \Ab_{*i}^\top \Hb(t))$
		\State $\lambda_{W_i}(t) = X_i(t) ( \delta + \rho H_i(t))$
\EndFor

\State $\Lambda_{\text{total}} \gets \sum_{i \in \Vcal} (\lambda_{N_i} + \lambda_{Y_i} + \lambda_{W_i})$

\State $u \sim \text{Unif}(0, 1)$
\State $t^* \gets - \log(u) / \Lambda_{\text{total}} $ \Comment{time until next event $\sim \text{Expo}(\Lambda_{\text{total}})$ }
\State $u^* \sim \text{Cat}([\lambda_{N_i} / \Lambda_{\text{total}}]_{i \in \Vcal}, [\lambda_{Y_i} / \Lambda_{\text{total}}]_{i \in \Vcal}, [\lambda_{W_i} / \Lambda_{\text{total}}]_{i \in \Vcal})$  \Comment{sample node and type of event}

\State \Return $t^*$, $u^*$ 

    \end{algorithmic}
  \label{algo:onlinealg}
\end{algorithm}

In a real world setting, infection and recovery events $\Yb(t)$ and $\Wb(t)$ occur independently of the sampling procedure for treatments $\Nb(t)$, yet 
they change its intensities $\mathbb{E}[d\Nb(t) | \Hcal(t) ]$. Thus, if an infection or recovery event occurs before the next sampled treatment event, the
treatment intensities change. It is then necessary to update the time of the planned treatment event using rejection sampling (if the treatment intensities
decrease) or the superposition principle (if the treatment intensities increase).
%
%
%
% Thus, if an infection or recovery event occurs before the planned treatment at $t^*$, $u^*$ sampled by Algorithm 1, the next treatment needs to be resampled. Specifically, if any event of $
% \Yb(t)$ and $\Wb(t)$ occurs at a time $t_{Y, W} < t^*$, the sampled treatment event at $t^*$ is \emph{rejected}, and subsequently resampled after $t_{Y, W}$. This follows from the 
% memoryless property of the exponentially distributed arrival times of  $\Yb(t)$,  $\Wb(t)$,  and $\Nb(t)$ and the change in conditional intensities. The flow of events is hence as follows

% \begin{algorithm}
%   \caption{Stochastic optimal control with real-world infections and recoveries}
%    \begin{algorithmic}[1]
    
%    \While{control of the epidemic in the network continues}
%        	\If{outside event of $\Yb(t), \Wb(t)$ occurs}
		%\State update state variables (Eq.~\ref{eq:sis-vars})
%		\State resample planned intervention $u^*, t^*$ using Algorithm 1
%	\Else
%		\If{$t = t^*$}
%			\State treat node $u^*$, \ie, $dN_{u^*}(t) = 1$
%			%\State update state variables (Eq.~\ref{eq:sis-vars})
%			\State sample new intervention $u^*, t^*$ using Algorithm 1
%			
%
%		\EndIf
%	\EndIf    	
%	
%	\State $t \gets t + dt$
% \EndWhile
%    \end{algorithmic}
%  \label{algo:practice}
% \end{algorithm}

\end{document}